\documentclass[10pt]{amsart}
\usepackage[margin=1in]{geometry}
\usepackage{amscd,amsmath,amsxtra,amsthm,amssymb,stmaryrd,xr,mathrsfs,mathtools,enumerate,commath, comment}
\usepackage{stmaryrd}
\usepackage[dvipsnames]{xcolor}
\usepackage{commath}
\usepackage{dsfont}
\usepackage{comment}
\usepackage{tikz-cd}
\usepackage{longtable} 
\usepackage{pdflscape} 
\usepackage{booktabs}
\usepackage{hyperref}
\definecolor{vegasgold}{rgb}{0.77, 0.7, 0.35}
\definecolor{darkgoldenrod}{rgb}{0.72, 0.53, 0.04}
\definecolor{gold(metallic)}{rgb}{0.83, 0.69, 0.22}
\hypersetup{
 colorlinks=true,
 linkcolor=OliveGreen,
 citecolor=NavyBlue,
 }

\usepackage[all,cmtip]{xy}

\DeclareFontFamily{U}{wncy}{}
\DeclareFontShape{U}{wncy}{m}{n}{<->wncyr10}{}
\DeclareSymbolFont{mcy}{U}{wncy}{m}{n}
\DeclareMathSymbol{\Sh}{\mathord}{mcy}{"58}
\usepackage[T2A,T1]{fontenc}
\usepackage[OT2,T1]{fontenc}

\newtheorem{theorem}{Theorem}[section]
\newtheorem{Lemma}[theorem]{Lemma}
\newtheorem*{theorem*}{Theorem}

\newtheorem{proposition}[theorem]{Proposition}

\newtheorem{definition}[theorem]{Definition}

\newtheorem*{ass*}{Assumption}

\newtheorem{remark}[theorem]{Remark}

\newcommand{\tr}{\operatorname{tr}}

\newcommand{\Q}{\mathbb{Q}}
\newcommand{\F}{\mathbb{F}}

\newcommand{\cO}{\mathcal{O}}

\newcommand{\op}[1]{\operatorname{#1}}

\newcommand{\bx}{\textbf{x}}
\newcommand{\bz}{\textbf{z}}
\newcommand{\bh}{\textbf{h}}
\newcommand{\by}{\textbf{y}}
\newcommand{\be}{\textbf{e}}
\newcommand{\ba}{\textbf{a}}
\newcommand{\E}{\mathbb E}
\numberwithin{equation}{section}

\begin{document}

\title[Projective spaces]{Morphism spaces on low degree hypersurfaces}

\author[H. Mishra]{Hrishabh Mishra}
\address[]{University of Wisconsin-Madison, WI, USA.}
\email{hmishra4@wisc.edu}

\keywords{}
\subjclass[2020]{14D20 (11P55, 14G05, 14J45, 14J70).}
\date{August 13, 2026. \emph{Revised:} September, 2026.}

\begin{abstract}
 For $r> 2$, we study the moduli space parameterising fixed degree morphisms $\mathbb P^r\to X$ where $X$ is a smooth hypersurface of low degree. More precisely, we prove the following result: let $n\geq 2, e\geq 1$ and $X\subset \mathbb P^{n-1}$ a smooth degree $d\geq 2$ hypersurface over an algebraically closed field of characteristic zero or greater than $d$, then $\mathrm{Mor}_e(\mathbb P^r, X)$ is irreducible of the expected dimension if
 \[
 n> 2^d(d-1)\binom{de+r-1}{r-1}.
 \]
 Our result extends the result of Browning-Yamagishi for $r=2$ via multiblock Weyl differencing.
\end{abstract}

\maketitle
\vspace{-1.5em}
{
 \hypersetup{linkcolor=black}
 \tableofcontents
}
\vspace{-3em}

\section{Introduction}

Let $K$ be an algebraically closed field, and let
\[
 X=\{f=0\}\subset \mathbb P_K^{n-1}
\]
be a smooth hypersurface of degree $d\geq 2$. For $r, e\geq 1$, we write $\op{Mor}_e(\mathbb P^r,X)$ for the scheme parametrising morphisms
$g:\mathbb P^r\to X$ satisfying
\[
 g^*\cO_X(1)\simeq \cO_{\mathbb P^r}(e),
\]
hence degree $e$ morphisms. Such a morphism is represented by an $n$-tuple of degree $e$ homogeneous forms in $r+1$ variables, without a common zero, up to simultaneous scalar multiplication. The condition that the image lie in $X$ is the identity $f(g_1,\ldots,g_n)=0$, which gives $\binom{de+r}{r}$ scalar equations. Thus the expected dimension is
\begin{equation}\label{eq:expected-dim}
 \mu_r(e)=n\binom{e+r}{r}-\binom{de+r}{r}-1.
\end{equation}
Our main result is the following.

\begin{theorem}\label{thm:main}
Let $r\geq 2$, $d\geq 2$, and $e\geq 1$. Suppose $K$ has characteristic zero or greater than $d$. Let $X\subset \mathbb P_K^{n-1}$ be a smooth hypersurface of degree $d$. If
\[
 n>2^d(d-1)\binom{de+r-1}{r-1},
\]
then $\op{Mor}_e(\mathbb P^r,X)$ is irreducible and has dimension $\mu_r(e)$.
\end{theorem}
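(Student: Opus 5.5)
We follow the spreading-out and point-counting strategy of Browning--Sawin and Browning--Yamagishi. The moduli space is cut out inside an open subset of affine space, so it suffices to control point counts on the affine cone. Write $V=\binom{e+r}{r}$ and let
\[
M\subset \mathbb A^{nV}
\]
be the affine variety of $n$-tuples $(g_1,\dots,g_n)$ of degree $e$ forms in $r+1$ variables with $f(g_1,\ldots,g_n)=0$ identically. $M$ is defined by $\binom{de+r}{r}$ equations, so every irreducible component of $M$ has dimension at least $\mu_r(e)+1$. The open locus without common zeros, taken modulo scalars, is $\op{Mor}_e(\mathbb P^r,X)$.

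\textbf{Step 1: reduction to point counts.} By the Lang--Weil estimates, after spreading $X$ out over a finitely generated ring and reducing to finite fields $\F_q$ of characteristic $p>d$, it suffices to prove
\[
\#M(\F_q)=q^{\mu_r(e)+1}\bigl(1+O(q^{-1/2})\bigr)
\]
as $q\to\infty$, uniformly. Indeed, this asymptotic forces $\dim M=\mu_r(e)+1$ and shows that $M$ has exactly one top-dimensional geometrically irreducible component. A standard argument then shows there are no lower-dimensional components: each component has dimension at least $\mu_r(e)+1$ because $M$ is a local complete intersection, as in Browning--Sawin. Passing to the open subset and quotienting by $\mathbb G_m$ gives irreducibility of $\op{Mor}_e$ and dimension $\mu_r(e)$. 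It remains to check that $\op{Mor}_e$ is nonempty, for instance by using lines, or by density of the generic point once the count is known.

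\textbf{Step 2: the circle method over $\F_q[t_1,\dots,t_r]$.} Write $\#M(\F_q)$ as an average of additive characters:
\[
\#M(\F_q)=q^{-\binom{de+r}{r}}\sum_{\alpha}\sum_{\bg}\psi\bigl(\langle\alpha,f(\bg)\rangle\bigr),
\]
where $\alpha$ ranges over the dual of the space of degree $de$ forms. The term $\alpha=0$ gives the main term $q^{nV-\binom{de+r}{r}}$. For $\alpha\neq0$ we bound the exponential sum by Weyl differencing $d-1$ times. Since the summation variable is a polynomial in $r+1$ variables, we difference in "multiblock" fashion: in each block we treat the forms as vectors indexed by monomials. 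After $d-1$ differencings, the sum is controlled by the number of $(\bh_1,\dots,\bh_{d-1})$ for which the multilinear form $\alpha\circ\Psi(\bh_1,\dots,\bh_{d-1},\cdot)$ vanishes. Here $\Psi$ is the polarization of $f$ composed with polynomial multiplication, and the assumption $p>d$ is used to divide by $d!$.

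\textbf{Step 3: the main obstacle.} The key step is the counting problem for the differenced system. For $\alpha\neq0$, we must show that the variety of $(\bh_1,\ldots,\bh_{d-1})$ on which the linear map $\bx\mapsto \alpha(\Psi(\bh,\bx))$ vanishes has codimension large enough to beat the $q^{\binom{de+r}{r}}$ loss.

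The plan is to restrict $\alpha$ to a generic line, or hyperplane slice, in $\mathbb P^r$. This reduces to the one-variable setting, which loses one dimension of monomials, and it is the reason $\binom{de+r-1}{r-1}$ appears rather than $\binom{de+r}{r}$. We then use smoothness of $X$, via the codimension of the singular locus of the multilinear form (Birch's bound, which is at most $1$ here), so that each fixed nonzero coefficient pattern cuts out codimension about $n/(\text{number of monomial slots})$.

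Summing over $\alpha$ and comparing exponents gives a savings of roughly
\[
\frac{n}{2^{d-1}(d-1)}-2\binom{de+r-1}{r-1}
\]
per unit. This is positive precisely under $n>2^d(d-1)\binom{de+r-1}{r-1}$. We expect most of the effort to go into making the slicing argument uniform in $\alpha$, that is, into handling $\alpha$ that are degenerate along every line.
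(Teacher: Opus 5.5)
Your character-sum identity in Step 2 is harmless. It is the same identity as the paper's \eqref{eq:N-integral}, because $\psi(\alpha_{\boldsymbol\beta}F_{\boldsymbol\beta})$ only sees the first $h_d(\boldsymbol\beta)$ digits of $\alpha_{\boldsymbol\beta}$. The real gap is Step 3: you leave it open, it is the whole proof, and the route you sketch for it does not work.

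There are $q^R$ frequencies, with $R=\binom{de+r}{r}$. Write $S(\alpha)$ for your inner sum over tuples. An estimate uniform in $\alpha\neq0$ would have to give $|S(\alpha)|=o(q^{nV-R})$, and this is false in the stated range. Take $\alpha$ to be evaluation at a point $x$. Then $\alpha(f(g_1,\ldots,g_n))=f(g_1(x),\ldots,g_n(x))$ depends on only $n$ linear functions of the tuple, so
\[
|S(\alpha)|=q^{nV-n}\Bigl|\sum_{\mathbf y\in\F_q^n}\psi(f(\mathbf y))\Bigr|,
\]
which equals $q^{nV-n/2}$ when $d=2$. A uniform bound would therefore force $n>2\binom{2e+r}{r}$. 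For $d=r=e=2$ the hypothesis allows $n=21$, whereas $2\binom{6}{2}=30$.

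This is the large joint singular locus of the coefficient system: every tuple with a base point is Birch-singular. Overcoming it requires organising the frequencies by degeneracy and paying for degenerate ones by their scarcity. ``Restricting $\alpha$ to a generic line'' does not do this. Your final saving $\frac{n}{2^{d-1}(d-1)}-2\binom{de+r-1}{r-1}$ is read off from the hypothesis rather than derived.

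The paper supplies the missing organisation as follows.
\begin{itemize}
\item It singles out one affine coordinate $u$ and writes $g_i=\sum_{\ba}t_{i,\ba}(u)\mathbf z^{\ba}$. The system becomes $C=\binom{de+r-1}{r-1}$ equations $F_{\boldsymbol\beta}$ over $\F_q[u]$, each with a single frequency $\alpha_{\boldsymbol\beta}\in\mathbb T$.
\item Degenerate frequencies are exactly those near rationals with small denominator in $\F_q[u]$. For example, evaluation at $(u_0,\mathbf z_0)$ corresponds to $\alpha_{\boldsymbol\beta}=\mathbf z_0^{\boldsymbol\beta}/(u-u_0)$. These are paid for by the small measure of the major arcs.
\item To decouple the $C$ frequencies, for each $\boldsymbol\beta$ it fixes a balanced decomposition (Lemma~\ref{lem:balanced}). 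It then differences only in the selected blocks, freezing all others. Every term of the phase other than $\alpha_{\boldsymbol\beta}G_{\boldsymbol\beta}$ is deficient in some selected block and drops out (Lemma~\ref{lem:isolation}, Proposition~\ref{prop:uniform}).
\item A geometric mean with $\rho=1/(C2^{d-1})$ reduces the $C$-dimensional integral to one-dimensional mean values of $|E_{\boldsymbol\beta}|^\rho$. These are handled by major/minor arcs and Davenport shrinking under $\sigma\rho>2(d-1)$, which is $n>2^d(d-1)C$ since $\sigma\geq n$.
\end{itemize}
This is why $\binom{de+r-1}{r-1}$ appears rather than $\binom{de+r}{r}$. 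The $u$-lengths $h_d(\boldsymbol\beta)$, which sum to $\binom{de+r}{r}$, are absorbed by the measure of $\mathfrak M(1)$, not by any saving.

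Your nonemptiness step also fails for $r\geq 2$. Lines do not help, since every morphism $\mathbb P^r\to\mathbb P^1$ is constant. Irreducibility of $Q_e(X)$ alone does not rule out that every tuple has a base point. The paper instead constructs a linear $\mathbb P^r\subset X$ by an inductive dimension count (Lemma~\ref{lem:linear-plane}) and composes it with $[x_0^e:\cdots:x_r^e]$.
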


Determining the nonemptiness, dimension and irreducible components of moduli spaces of maps is a basic problem in the geometry of rational curves. When the source is $\mathbb P^1$, one may study either the space of parametrised morphisms or its compactification by Kontsevich stable maps. For general low degree hypersurfaces, Harris--Roth--Starr proved irreducibility, reducedness and the expected dimension property for the genus zero Kontsevich spaces in a suitable range, and Riedl--Yang later obtained a substantially sharper result for general Fano hypersurfaces (\cite{HarrisRothStarr2004}, \cite{RiedlYang2019}). For arbitrary smooth hypersurfaces, Browning and Vishe introduced a different approach based on the function field circle method, proving irreducibility and the expected dimension property in a uniform low degree range \cite{Browning2017}. Although the expected dimension is readily predicted by deformation theory, proving that it is the actual dimension, and that no unexpected irreducible components occur, requires substantial global information about the space of maps. Function field analytic number theory has also been used to establish finer properties of relevant moduli spaces, for example, in \cite{haseliu2025highergenuscirclemethod} Hase-Liu reinterprets the Browning-Vishe machinery geometrically and proves the irreducibility and expected dimension property for moduli space of arbitrary genus curves on smooth low degree hypersurfaces, Browning--Sawin \cite{Browning2023} establish bounds on the dimension of the singular locus, and Glas--Hase-Liu \cite{glas2024terminalsingularitiesmodulispace} used analytic methods to study singularities of the relevant moduli spaces.

The study of higher dimensional rational subvarieties is a natural extension of the theory of rational curves, but it reflects rather stronger geometric properties of the target. Such parameter spaces provide a natural framework for studying the existence and variation of higher dimensional rational subvarieties on $X$. This perspective is related to the theory of higher Fano manifolds, where positivity of higher Chern characters is expected to force the existence of higher dimensional rational subvarieties. For example, de Jong and Starr showed that, under suitable positivity and irreducibility hypotheses, a Fano variety is covered by rational surfaces \cite{deJongStarr2007}.

Even for hypersurfaces, relatively few global results are known for parameter spaces of higher dimensional rational varieties. Browning and Yamagishi recently treated the first case involving all morphisms from a fixed higher dimensional source, proving irreducibility and the expected dimension for $\op{Mor}_e(\mathbb P^2,X)$ when $X$ is an arbitrary smooth hypersurface of sufficiently low degree \cite{browning2025rationalsurfaceslowdegree}. Theorem~\ref{thm:main} extends the surface theorem of Browning--Yamagishi~\cite{browning2025rationalsurfaceslowdegree}
to projective spaces of arbitrary dimension $r> 2$, giving a uniform result for higher dimensional projective sources. In forthcoming work, Matthew Hase-Liu treats the more general case of morphism spaces from an arbitrary projective source, obtaining irreducibility and the expected dimension in a range quadratic in the degree by different methods, based on his quadratic Birch theorem over function fields \cite{HL26}.

It is also natural to consider the \emph{projective coefficient compactification}
\begin{equation} \label{eq:def-Qe}
Q_e(X) := \left\{ [g_1:\cdots:g_n] \in \mathbb P\!\left(H^0\bigl(\mathbb P^r,\mathcal O_{\mathbb P^r}(e)\bigr)^{\oplus n} \right) : f(g_1,\ldots,g_n)=0 \right\}.
\end{equation}
The morphism scheme is the open locus
\[
\operatorname{Mor}_e(\mathbb P^r,X)\subset Q_e(X)
\]
where the forms $g_1,\ldots,g_n$ have no common zero. Thus, $Q_e(X)$ also contains degenerate tuples with base points. Proving that $Q_e(X)$ is irreducible is therefore stronger than proving irreducibility of the morphism locus: it shows that the base point boundary does not support an additional irreducible component.

The formulation in terms of $Q_e(X)$ is particularly natural for our method. The function field counting problem used in the proof counts all coefficient tuples satisfying $f(g_1,\ldots,g_n)=0$, including those with base points, and hence counts the affine cone over $Q_e(X)$ rather than just the cone over the open morphism locus. Thus, our counting argument naturally proves the stronger result for $Q_e(X)$.

\begin{theorem}\label{thm:Q_e}
    Under the hypothesis of Theorem \ref{thm:main}, the projective coefficient space $Q_e(X)$ is irreducible of dimension $\mu_r(e)$.
\end{theorem}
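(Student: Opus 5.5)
We follow the Browning--Vishe strategy, reducing irreducibility and dimension to a point count over finite fields, as in \cite{Browning2017, browning2025rationalsurfaceslowdegree}. By spreading out we may take $K=\overline{\F_q}$ with $q$ a power of a prime $p>d$, and $X$, $f$ defined over $\F_q$; the characteristic zero case then follows by a standard specialisation argument. Let $V\subset \mathbb A^{n\binom{e+r}{r}}$ be the affine cone over $Q_e(X)$, cut out by the $\binom{de+r}{r}$ coefficient equations of $f(g_1,\dots,g_n)=0$. Every irreducible component of $V$ has dimension at least $\mu_r(e)+1$, so by the Lang--Weil argument of Browning--Vishe (their Lemma relating point counts over $\F_{q^\ell}$ to top-dimensional components) it suffices to show
\[
\#V(\F_{q^\ell}) = q^{\ell(\mu_r(e)+1)}\bigl(1+o(1)\bigr)\qquad(\ell\to\infty).
\]
This bound simultaneously forces $\dim V=\mu_r(e)+1$ and implies that $V$ has exactly one top-dimensional geometric component, which, combined with the lower bound on component dimensions, gives irreducibility of $V$. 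Irreducibility and dimension $\mu_r(e)$ of $Q_e(X)$ follow, and Theorem~\ref{thm:main} follows because $\operatorname{Mor}_e$ is a nonempty open subset of $Q_e(X)$ (nonempty by the dimension count, or by exhibiting lines composed with a Veronese-type map).

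We count $V(\F_{q})$ via the circle method over $\F_q[t_1,\dots,t_r]$. More conveniently, we work with the dehomogenised polynomial ring of $r$ variables and degree-$\le e$ polynomials, so that the target values $f(\mathbf g)$ range over polynomials of degree $\le de$ in $r$ variables. Write
\[
\#V=q^{-\binom{de+r}{r}}\sum_{\alpha}\sum_{\mathbf g}\psi\bigl(\langle \alpha, f(\mathbf g)\rangle\bigr),
\]
where $\alpha$ runs over the dual space of coefficient vectors of degree-$\le de$ forms. The term $\alpha=0$ gives the main term $q^{\mu_r(e)+1}$, so the task is to show that the remaining sum of exponential sums is $o$ of this.

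To bound $S(\alpha)=\sum_{\mathbf g}\psi(\langle\alpha,f(\mathbf g)\rangle)$ we use Weyl differencing $d-1$ times; this is where $p>d$ enters. After differencing, $|S(\alpha)|^{2^{d-1}}$ is bounded by $q^{(\ldots)}$ times the number of tuples $(\mathbf h_1,\dots,\mathbf h_{d-1})$ for which the multilinear forms $\langle \alpha, \Psi_j(\mathbf h_1,\dots,\mathbf h_{d-1},\cdot)\rangle$ vanish identically on degree-$\le e$ polynomials. This count is the analogue of Birch's singular-locus condition. The difficulty compared with $r=1$ is that the pairing $\langle\alpha,\cdot\rangle$ restricted to products of polynomials defines a linear map whose rank behaves differently in several variables. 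The \emph{multiblock} differencing referred to in the abstract handles this: we decompose the degree-$\le de$ monomials in $r$ variables into blocks indexed by the degree in one distinguished variable. Each block is then a problem in $r-1$ variables of total degree $\le de$, which is where $\binom{de+r-1}{r-1}$ appears. Within each block, we apply the smoothness of $X$ through a Birch-type bound: the locus where the $n$ partial derivatives of the multilinear form vanish has codimension at least $n-(\text{number of block conditions})$.

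The main obstacle is the following dichotomy for $\alpha\ne0$. Either $\alpha$ is ``major arc'', with the associated multiplication operator of low rank, so that $S(\alpha)$ is essentially a complete sum controlled explicitly. Or the number of differenced solutions is small, giving a saving of $q^{-c\,n/2^{d-1}}$ per block. Summing over at most $q^{\binom{de+r}{r}}$ values of $\alpha$, the total contribution is $o(q^{\mu_r(e)+1})$ provided that $n/2^{d-1}$ beats $2(d-1)\binom{de+r-1}{r-1}$ after the losses from counting $\alpha$ by rank stratum. This is exactly the requirement $n>2^d(d-1)\binom{de+r-1}{r-1}$. Carrying out the rank-stratification of $\alpha$ in $r$ variables, where Browning--Yamagishi's $r=2$ argument used explicit Hankel-type structure, is where we expect most of the work to lie. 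Our first attempt will be to induct on $r$ through the block decomposition, so that the bound for each block is reduced to the known one in $r-1$ variables.
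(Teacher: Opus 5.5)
\textbf{There is a genuine gap: the point count, which is the whole content of the theorem, is not proved.}

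Your reduction agrees with the paper. By Krull, every component of the cone has dimension at least $\mu_r(e)+1$. An upper bound $\#V(\F_{q^\ell})\le q^{\ell(\mu_r(e)+1)}(1+o(1))$ together with Lang--Weil then forces the expected dimension and a unique top-dimensional geometric component, and spreading out handles general $K$.

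The problem is the count itself. You defer the ``rank-stratification of $\alpha$'' and an induction on $r$ to future work. The condition $n>2^d(d-1)\binom{de+r-1}{r-1}$ is matched after the fact rather than derived.

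The route you sketch is also not one that works here. Weyl differencing the full phase $\langle\alpha,f(\mathbf g)\rangle$ in all coefficient variables gives a count that depends jointly on all frequencies. It is governed by the singular locus of the whole system $\{F_{\boldsymbol\beta}\}$, which is large and supported on a very lopsided box. Your Birch-type claim (``codimension at least $n-(\text{number of block conditions})$'') is neither justified nor what is needed.

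Your blocks are also transposed relative to what works. You group monomials by the power of the distinguished variable. But $\binom{de+r-1}{r-1}$ counts the monomials $\mathbf z^{\boldsymbol\beta}$ in the other $r-1$ variables. The proof treats each of these as a single function-field frequency $\alpha_{\boldsymbol\beta}\in\mathbb T$ that encodes all powers of $u$ at once. Your discrete Fourier sum equals $\int_{\mathbb T^C}S(\boldsymbol\alpha)\,d\boldsymbol\alpha$, but it must be organised this way before Dirichlet approximation and shrinking can be used.

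\textbf{The missing idea is frequency isolation.}
\begin{enumerate}
\item For each $\boldsymbol\beta$, write $\boldsymbol\beta=\mathbf a_1+\cdots+\mathbf a_d$ with the $|\mathbf a_i|$ differing by at most one (Lemma~\ref{lem:balanced}). Let $\boldsymbol\gamma_j$ be the distinct $\mathbf a_i$, with multiplicities $m_j$.
\item Freeze all other blocks. Difference $m_j$ times in block $\mathbf t_{\boldsymbol\gamma_j}$, and $m_s-1$ times in the last block.
\item Every monomial of $\Phi_{\boldsymbol\alpha}$ other than $\alpha_{\boldsymbol\beta}G_{\boldsymbol\beta}$ is deficient in some selected block (Lemma~\ref{lem:isolation}). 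So it is either annihilated or becomes independent of the undifferenced variable. This gives $|S(\boldsymbol\alpha)|^{2^{d-1}}\le \#\mathcal U^{2^{d-1}}|E_{\boldsymbol\beta}(\alpha_{\boldsymbol\beta})|/E_{\boldsymbol\beta}(0)$, uniformly in all other frequencies.
\item Take the geometric mean over the $C=\binom{de+r-1}{r-1}$ indices with $\rho=1/(C2^{d-1})$. This factors the $C$-dimensional integral into $\prod_{\boldsymbol\beta}\int_{\mathbb T}|E_{\boldsymbol\beta}|^\rho$.
\item Bound each one-dimensional mean value by a major/minor arc argument (Dirichlet, shrinking, Lemma~\ref{lem:poly-points}). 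This uses $\sigma\ge n$, which comes from the diagonal argument for the single multilinear form $\Gamma_f$. The balanced decomposition guarantees $P_{\min}\ge (H_{\boldsymbol\beta}-d+1)/(2(d-1))$.
\end{enumerate}
The requirement $\sigma\rho>2(d-1)$ is exactly $n>2^d(d-1)C$, and the $\mathfrak M(1)$ contributions give leading constant $1$.

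A side point for Theorem~\ref{thm:main}: nonemptiness of $\operatorname{Mor}_e$ does not follow from a dimension count, and lines do not suffice. You need a linear $\mathbb P^r\subset X$ (Lemma~\ref{lem:linear-plane}), composed with $x_i\mapsto x_i^e$.
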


The specialization $e=1$ connects our result with the classical geometry of Fano schemes. Indeed, if $F_r(X)$ denotes the Fano scheme of $r$-planes contained in $X$, then forgetting the parametrization gives a principal $\operatorname{PGL}_{r+1}$ bundle
\[
\operatorname{Mor}_1(\mathbb P^r,X) \longrightarrow F_r(X).
\]
Consequently, our theorem implies that $F_r(X)$ is geometrically irreducible of the expected dimension
\[
(r+1)(n-r-1)-\binom{d+r}{r}.
\]

Fano schemes of linear spaces on general hypersurfaces and complete intersections have been studied extensively by Borcea, Debarre--Manivel and Hochster--Laksov \cite{Borcea1990,DebarreManivel1998,HochsterLaksov1987}; see also the work of Starr on linear spaces and Veronese varieties contained in sufficiently general hypersurfaces \cite{Starr2017}. For arbitrary smooth hypersurfaces in characteristic zero, Beheshti and Riedl proved irreducibility and the expected-dimension property in a substantially sharper range \cite{BeheshtiRiedl2021}.

Thus, in the linear case, the novelty of our result is not an improvement of the characteristic zero numerical range. Rather, to
the best of our knowledge, it gives the first uniform positive characteristic analogue for $r$ planes with arbitrary
$r$, applying to every smooth hypersurface over an algebraically closed field of characteristic zero or greater than $d$. More importantly, the Fano scheme statement is only the degree one specialization of our results for higher degree morphisms and their coefficient compactifications.

We briefly describe the main difficulty in proving Theorem~\ref{thm:main}. After passing to a finite field and choosing one distinguished affine coordinate $u$ on $\mathbb P^r$, a degree $e$ polynomial can be written as
\[
g_i(u,\mathbf z)= \sum_{\mathbf a\in\Delta_{r-1}(e)} t_{i,\mathbf a}(u)\mathbf z^{\mathbf a}, \qquad \deg t_{i,\mathbf a}\leq e-|\mathbf a|,
\]
where
\[
\Delta_{r-1}(e) = \left\{\mathbf a\in\mathbb Z_{\geq 0}^{r-1}: |\mathbf a|\leq e \right\}.
\]
The coefficient blocks $t_{i,\mathbf a}$ therefore range over boxes whose side lengths vary from $q$ to $q^{e+1}$. Expanding $f(g_1,\ldots,g_n)$ in the variables $\mathbf z$ produces
\[
\binom{de+r-1}{r-1}
\]
coefficient polynomials. The resulting system has a large joint singular locus and is supported on a highly lopsided box, so standard circle method estimates for systems of forms do not apply directly with the strength required here.

Browning and Yamagishi overcame these difficulties for $r=2$ by choosing, for each coefficient equation, a Weyl differencing pattern which isolates that equation from all the others. In their setting the coefficient blocks are indexed by an interval and the relevant
leading terms involve at most two different blocks. For general $r$, the indices instead form an $(r-1)$ dimensional simplex, and
a single coefficient may involve an arbitrary number of distinct blocks. Schindler tackles similar issues of lopsidedness over $\Q$ in \cite{Schindler2015}.

The main new ingredient of this paper is a selective multiblock Weyl differencing estimate adapted to this situation. For each coefficient index, we construct a balanced decomposition which selects the appropriate coefficient blocks and their multiplicities. After freezing the unselected blocks, the differencing process isolates the chosen coefficient frequency uniformly in all the other frequencies and in the frozen coefficients. A geometric mean over the resulting one frequency estimates then reduces the full multidimensional circle integral to a product of one dimensional mean values. This gives the required point counting estimate with leading constant one. Lang--Weil then yields irreducibility over finite fields, and the general result follows by spreading out. The idea of exploiting the common multilinear structure among the coefficient equations, rather than treating them as a general system of forms, also appears in Brandes’s work \cite{Brandes2013}.

The multiblock separation principle developed here may also be useful for other systems whose individual equations can be isolated through
different block multidegrees, even when the full system has a large singular locus. In particular, it suggests a route towards analogous
questions for morphisms into smooth complete intersections---the present argument separates the source coefficient indices, while a
system estimate would be required to handle the remaining frequencies coming from the different equations defining the intersection. In the recent paper \cite{browning2024rationalcurvescompleteintersections} Browning--Vishe--Yamagishi study the space of rational curves on smooth complete intersections adapting the work of Rydin Myerson \cite{RydinMyerson2017, RydinMyerson2018} to the setting of function fields.

\subsection*{Organization} In \S\ref{sec:preliminaries} we introduce the function field notation and the auxiliary point counting estimates. In \S\ref{sec:weyl-differencing} we establish the selective multiblock Weyl differencing inequality, and in
\S\ref{sec:mean-value} we prove the associated mean value estimate. We complete the finite field count in \S\ref{sec:finite-field-count}, and in \S\ref{sec:geometric-deduction} we deduce Theorem~\ref{thm:main} using Lang--Weil and spreading out.

\subsection*{Acknowledgements} The author is very grateful to Tim Browning for many helpful conversations, encouragement, a careful reading of an earlier version of this paper, and numerous comments and suggestions. The author also thanks Timothy Cheek for carefully reading an earlier version and for helpful feedback, Jordan Ellenberg for helpful discussions and encouragement, and Matthew Hase-Liu for apprising the author of his upcoming work and for sharing a preliminary version.

\subsubsection*{AI Statement} After a preliminary version of this manuscript was prepared, ChatGPT was used for assistance in polishing parts of the exposition, minor LaTeX formatting, and bibliographic formatting.

\section{Background and auxiliary lemmas}\label{sec:preliminaries}

Let $\F_q$ denote a finite field with characteristic $p> d$. As usual put
\[
\mathbb K= \F_q(u),\, \mathbb K_\infty=\F_q((u^{-1})).
\]
Recall the absolute value $|\cdot|$ on $\mathbb K$ given by
\[
|a|=\begin{cases}
 q^{\deg a}\, &\mathrm{if}\, a \in \mathbb K^\times,\\
 0\, &\mathrm{otherwise.}
\end{cases}
\]
We extend this absolute value to $\mathbb K_\infty$ and identify it as the completion of $\mathbb K$ with respect to $|\cdot|$. Set
\[
\mathbb T=\{\alpha\in\mathbb K_\infty: |\alpha|< 1\}=\left\{\sum_{i<0}a_iu^i: a_i\in \F_q\right\},
\]
equipped with the normalized Haar measure $d\alpha$. Let $\psi: \mathbb K_\infty \to \mathbb C^\times$ denote the nontrivial unitary additive character induced by the character $e_q: \F_q\to \mathbb C^\times$ defined as
\[
e_q(a):= \exp(2\pi i\tr(a)/p),
\]
where $\tr: \F_q \to \F_p$ denotes the trace map. Concretely, if $\alpha = \sum_{i<M}a_iu^i \in \mathbb K_\infty$, then we set
\[
\psi(\alpha)= e_q(a_{-1}).
\]
For any $a \in \F_q[u]$,
\begin{equation}\label{int-orth-id}
 \int_{\mathbb T}\psi(\alpha a)d\alpha=
 \begin{cases}
  0,\, &\text{if}\, a\neq0,\\
  1,\, &\text{otherwise}.
 \end{cases}
\end{equation}
We also have the finite box orthogonality identity

\begin{equation}\label{sum-orth-id}
 \sum_{x\in \F_q[u], |x|< q^P} \psi(\beta x)=
 \begin{cases}
  q^P,\, &\text{if}\, \|\beta\|< q^{-P},\\
  0,\, &\text{otherwise}.
 \end{cases}
\end{equation}

Here we denote by $\|\alpha\|$ the absolute value of fractional part of $\alpha$. For a vector $\textbf{x}$ we set $|\textbf{x}|$ to be the maximum of the absolute values of its coordinates. For an integer $P> 0$, set
\[
\mathcal B(P):= \{\textbf{x}\in \F_q[u]^n: |\textbf{x}|< q^P\},
\]
hence, $\#\mathcal B(P)= q^{nP}$. We now state two crucial lemmas: a function field version of Davenport's shrinking lemma and a uniform point counting lemma.

\begin{Lemma}[Lemma 4.2, \cite{browning2025rationalsurfaceslowdegree}]\label{lem:shrinking}
Let $L_1,\ldots,L_N\in \mathbb K_\infty[t_1,\ldots,t_N]$ be linear forms
\[
 L_i(\mathbf t)=\sum_{j=1}^N\gamma_{ij}t_j
\]
whose coefficient matrix is symmetric. Let $A\in\mathbb Q_{\geq 0}$ and $Z\in\mathbb Q_{\leq 0}$, and put
\[
 \mathcal N_A(Z)=\#\left\{\mathbf t\in\F_q[u]^N:
 |t_i|<q^{A+Z},\quad \|L_i(\mathbf t)\|<q^{-A+Z}
 \ \text{for every }i\right\}.
\]
If $Z_1\leq Z_2\leq 0$, $A-Z_2\geq 0$, and $A\pm Z_i$ are integers, then
\[
 \mathcal N_A(Z_2)\leq q^{N(Z_2-Z_1)}\mathcal N_A(Z_1).
\]
\end{Lemma}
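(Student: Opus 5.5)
The plan is to follow the classical proof of Davenport's shrinking lemma, transplanted to $\mathbb K_\infty$. There the geometry of numbers is replaced by the ultrametric lattice structure of $\F_q[u]$ and its duality. We may reduce to the case where $A\pm Z_1$ and $A\pm Z_2$ are integers and $Z_2=Z_1+1$; the general case then follows by iterating, since $q^{N(Z_2-Z_1)}$ is multiplicative along a chain $Z_1=Y_0\le Y_1\le\cdots\le Y_k=Z_2$. If $Z_2-Z_1$ is not an integer, we instead use a chain with rational steps, and the argument below works for any single step $Z\to Z'$. It is cleanest, though, to treat a general pair $Z_1\le Z_2$ directly by the following counting.

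First I would encode $\mathcal N_A(Z)$ as the number of points of a lattice in a box. Consider the lattice
\[
\Lambda=\{(\mathbf t,\mathbf s)\in\F_q[u]^{2N}\}\subset\mathbb K_\infty^{2N}, \qquad \mathbf t\mapsto (u^{-A}\mathbf t,\; u^{A}(\Gamma\mathbf t-\mathbf s)),
\]
where $\Gamma=(\gamma_{ij})$. Then $\mathcal N_A(Z)$ equals the number of $\mathbf t$ for which some $\mathbf s$ puts the image in the ball $\{|\cdot|<q^{Z}\}$. Such an $\mathbf s$ is unique whenever $A-Z\ge0$, since $\|\cdot\|<q^{-A+Z}\le 1$ determines the integral part. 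The scalings by $u^{\pm A}$ are legitimate because $A$ is only rational; to handle this, I would instead work with the lattice $M=\{(\mathbf t,\Gamma\mathbf t-\mathbf s)\}$ and the box $|\mathbf x|<q^{A+Z}$, $|\mathbf y|<q^{-A+Z}$. The integrality of $A\pm Z_i$ makes these boxes $\F_q$-subspaces of the form $\{\deg<\text{integer}\}$.

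The key structural fact is that $M\cap\text{box}(Z)$ is an $\F_q$-vector space, because the ultrametric inequality makes boxes closed under addition. Hence $\mathcal N_A(Z)=q^{\dim_{\F_q}}$. The symmetry of $\Gamma$ makes $M$ self-dual up to the twist $(\mathbf x,\mathbf y)\mapsto(\mathbf y,-\mathbf x)$ under the pairing $\psi(\mathbf x\cdot\mathbf y'-\mathbf y\cdot\mathbf x')$. Indeed, $\mathbf t\cdot(\Gamma\mathbf t'-\mathbf s')-(\Gamma\mathbf t-\mathbf s)\cdot\mathbf t'=\mathbf s\cdot\mathbf t'-\mathbf t\cdot\mathbf s'\in\F_q[u]$, so $\psi$ of it is trivial.

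I would then take successive minima. Choose a reduced basis of $M$ (ultrametric lattices admit orthogonal bases after the diagonal rescaling by the box weights). With respect to the weighted norm $\max(q^{-A}|\mathbf x|,q^{A}|\mathbf y|)$ this gives successive minima $q^{c_1}\le\cdots\le q^{c_{2N}}$. By the orthogonality of the basis, $\dim(M\cap\text{box}(Z))=\sum_j\max(0,\lceil Z-c_j\rceil)$, suitably adjusted for integer degrees. Self-duality of $M$ with covolume $1$ gives $c_j+c_{2N+1-j}=0$, so at most $N$ of the $c_j$ are negative. As $Z$ decreases from $Z_2\le0$ to $Z_1$, only the indices with $c_j<Z_2\le0$ contribute a change, and each contributes a loss of at most $Z_2-Z_1$. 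Since there are at most $N$ such indices, $\dim(Z_2)-\dim(Z_1)\le N(Z_2-Z_1)$, which is the claim.

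The main obstacle is to set up rigorously the reduced basis and the duality $c_j=-c_{2N+1-j}$ over $\mathbb K_\infty$ with rational weights $A$. I would handle this via Mahler's function-field geometry of numbers: a lattice in $\mathbb K_\infty^{m}$ has a basis realizing the successive minima and orthogonal for the sup norm, and the minima of the dual lattice are the reciprocals. The rational weights can be cleared by base-changing to $\F_q((u^{-1/D}))$ with $D$ a common denominator, or by passing to the integer parts as the integrality hypotheses on $A\pm Z_i$ allow.
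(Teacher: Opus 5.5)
The paper gives no proof of this lemma; it imports it from Browning--Yamagishi. The only comparison available is therefore with the standard argument for shrinking lemmas, which is what you reproduce: Davenport's successive-minima proof, transplanted to $\mathbb K_\infty$ via Mahler's geometry of numbers. Your argument is correct in substance.

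You use the symmetry of $\Gamma$ exactly where it is needed, to make $M=\{(\mathbf t,\Gamma\mathbf t-\mathbf s)\}$ self-dual for the symplectic pairing and hence force $c_j+c_{2N+1-j}=0$. The ultrametric structure makes the count exact, $\dim_{\mathbb F_q}(M\cap\mathrm{box}(Z))=\sum_j\max(0,\lceil Z-c_j\rceil)$. That exactness is why you get the sharp constant $q^{N(Z_2-Z_1)}$ rather than Davenport's $\ll$. Compared with the bare citation, your version shows where each hypothesis enters, but two of those points should be stated rather than waved at.

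\textbf{Rational weights.} The difficulty you single out as the main obstacle dissolves once you note that $2A=(A+Z_1)+(A-Z_1)\in\mathbb Z$. If $A\notin\mathbb Z$, then $F(\mathbf x,\mathbf y)=\max(q^{-A}|\mathbf x|,q^{A}|\mathbf y|)=q^{-1/2}\max(q^{-(A-1/2)}|\mathbf x|,q^{A+1/2}|\mathbf y|)$. This is a constant multiple of an integrally weighted sup norm, so Mahler's theorem gives an $F$-orthogonal basis of $M$ directly; orthogonality is insensitive to rescaling the norm. Moreover, $F$ is its own dual norm under the symplectic pairing, and $M^\perp=M$. Hence the dual basis is an $F$-orthogonal basis of $M$ with norms $q^{-c_j}$, which gives $\{c_j\}=\{-c_j\}$ as multisets without a separate appeal to a duality theorem. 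Your alternative of base-changing to $\mathbb F_q((u^{-1/D}))$ is unnecessary. As stated it would count points of $M\otimes\mathbb F_q[u^{1/D}]$ rather than of $M$, and would need a further comparison.

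\textbf{Integer step.} $Z_2-Z_1=(A+Z_2)-(A+Z_1)$ is automatically an integer, and this is exactly what gives $\lceil Z_2-c_j\rceil-\lceil Z_1-c_j\rceil=Z_2-Z_1$. You should delete your aside about chains with non-integer steps. For a non-integer step the ceiling loss can exceed the step, so the claimed per-index bound would fail; fortunately that case never arises under the hypotheses.
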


The next lemma gives uniform point count on affine varieties.

\begin{Lemma}[Lemma 2.1, \cite{browning2025rationalsurfaceslowdegree}]\label{lem:poly-points}
Let $V\subset \mathbb A^N_{\F_q}$ be an affine variety of dimension $m$. For every integer $J\geq 1$,
\[
 \#\{\mathbf x\in V(\F_q[u]):|\mathbf x|<q^J\} \leq \delta(V)\cdot q^{Jm},
\]
where $\delta(V)= \sum_i\deg V_i$ with $V=\cup_i V_i$ is the decomposition of $V$ into its irreducible components.
\end{Lemma}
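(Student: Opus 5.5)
We prove Lemma~\ref{lem:poly-points} by identifying polynomial points with $\F_q$-points of an auxiliary variety. Then we bound those points by a Bézout-type count of $\F_q$-points on a variety of known dimension and degree.

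\textbf{Reduction to an auxiliary variety.} A vector $\mathbf x\in\F_q[u]^N$ with $|\mathbf x|<q^J$ is determined by its coefficients. Write $x_i=\sum_{k=0}^{J-1}c_{i,k}u^k$; then $\mathbf x$ corresponds to a point $\mathbf c\in\mathbb A^{NJ}(\F_q)$. The condition $\mathbf x\in V(\F_q[u])$ means $F(\mathbf x(u))=0$ identically in $u$ for every $F$ in the ideal of $V$. Equivalently, $\mathbf x(u)$ is a morphism $\mathbb A^1\to V$ of degree $<J$, i.e.\ an $\F_q$-point of the space $W_J(V)\subset \mathbb A^{NJ}$ parametrising such curves. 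The cleanest comparison is with the product $V^J$ via evaluation. Fix $J$ distinct points $\lambda_1,\dots,\lambda_J$, enlarging to $\overline{\F}_q$ if $q<J$. Interpolation at these points gives a linear isomorphism
\[
\Phi:\mathbb A^{NJ}\to(\mathbb A^N)^J,\qquad \mathbf c\mapsto(\mathbf x(\lambda_1),\dots,\mathbf x(\lambda_J)),
\]
because a polynomial of degree $<J$ is determined by its values at $J$ points (Vandermonde). If $\mathbf x(u)\in V$ for all $u$, then $\Phi(\mathbf c)\in V^J$. Hence $W_J(V)\subseteq \Phi^{-1}(V^J)$, and $\Phi^{-1}(V^J)$ is a closed subvariety of $\mathbb A^{NJ}$, defined over $\overline{\F}_q$, of dimension $Jm$. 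Its components are linear images of the products $V_{i_1}\times\cdots\times V_{i_J}$, so they have total degree $\sum\prod_k\deg V_{i_k}=\delta(V)^J$. This is too large: the plan needs degree $\delta(V)$, not $\delta(V)^J$.

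\textbf{Refinement.} To get the sharp constant, we count instead via a linear projection. First reduce to $V$ irreducible; the general case follows by summing over components, and the bound is additive in $\delta$. By Noether normalisation, after a linear change of coordinates over $\F_q$ (enlarging $q$ if needed, which only increases the count), there is a finite projection $\pi:V\to\mathbb A^m$ onto the first $m$ coordinates. For a general such projection, each fibre over a point of $\overline{K}$, for any field $K\supseteq\F_q$, has at most $\deg V$ points. Apply this with $K=\F_q(u)$. A point $\mathbf x\in V(\F_q[u])$ with $|\mathbf x|<q^J$ projects to $\pi(\mathbf x)\in\F_q[u]^m$, also of size $<q^J$, giving at most $q^{Jm}$ choices. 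Each fibre of $\pi$ over a $\F_q(u)$-point contains at most $\deg V$ points of $V(\overline{\F_q(u)})$, hence at most $\deg V$ polynomial points. Altogether this gives $\#\le \deg V\cdot q^{Jm}$.

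\textbf{Main obstacle.} The delicate step is ensuring the linear change of coordinates is defined over $\F_q$ and keeps the box structure. Since it is a linear map over $\F_q$, it preserves $|\cdot|$ up to the max norm, so boxes map to boxes; and Noether normalisation over a finite field may require passing to $\F_{q^k}$. To handle the second issue, we count instead $V(\F_{q^k}[u])$ points in the larger box, whose projection still has size $<q^J$ in $\F_{q^k}[u]^m$. This, however, gives $q^{kJm}$ choices, which is too many. The first thing to try is therefore a coordinate projection: choose $m$ of the coordinates $x_{i_1},\dots,x_{i_m}$ such that $V\to\mathbb A^m$ is generically finite. Such coordinates always exist over $\F_q$, since some coordinate subspace works for any variety of dimension $m$. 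We then bound fibres by $\deg V$ via Bézout, since fibres over $\F_q(u)$-points are intersections of $V$ with a linear space of codimension $m$. Infinite fibres can occur over a proper closed subset. We would handle these by induction on $m$: they lie in $V\cap\pi^{-1}(Z)$ with $\dim<m$, and induction together with Bézout degree control gives a lower-order contribution absorbed into the leading term.
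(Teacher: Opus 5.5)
The paper does not prove this lemma; it only quotes it from Browning--Yamagishi, so your argument has to stand on its own. It does not yet prove the statement. Most of what you write is fine: a coordinate projection $\pi\colon V\to\mathbb A^m$ that is generically finite exists over $\F_q$, it preserves the box, and a finite fibre $V\cap L$ has at most $\deg V$ points by Bézout. The gap is the last step, the points lying over the locus $Z\subset\mathbb A^m$ where the fibres are positive-dimensional.

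Your accounting gives at most $\deg V\cdot q^{Jm}$ from the finite fibres. It then adds the points over $Z$, which induction bounds by $\delta(V\cap\pi^{-1}(Z))\,q^{J(m-1)}$. An extra positive term cannot be ``absorbed into the leading term'' of an inequality whose constant is exactly $\delta(V)$. The only available slack is $\deg V\cdot\#\{\mathbf y\in Z:|\mathbf y|<q^J\}$, and it need not cover the excess. For example, take $V=\{x_1x_3=x_2\}\subset\mathbb A^3$ projected to $(x_1,x_2)$. The fibre over $(0,0)$ is the $x_3$-axis, which contains $q^J$ box points, against a saving of only $\deg V=2$.

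Separately, nothing you state bounds $\delta(V\cap\pi^{-1}(Z))$ in terms of $\deg V$, since the degree of the non-finite locus $Z$ is not controlled a priori. So your argument yields at best $\deg V\cdot q^{Jm}+O_V(q^{J(m-1)})$, not the lemma.

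The missing idea is to slice by one coordinate at a time, which eliminates exceptional fibres altogether. Prove, by induction on $m$, the statement for closed subvarieties $V\subset\mathbb A^N$ defined over $\overline{\F_q(u)}$, still counting points of $\F_q[u]^N$ in the box. This is harmless because $\delta$ is preserved under extension of the base field, $\F_q$ being perfect.

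Reduce to $V$ irreducible of dimension $m\geq 1$; the base case $m=0$ is a single point. Choose a coordinate $x_i$ that is nonconstant on $V$. For each of the $q^J$ polynomials $a$ with $|a|<q^J$, consider the slice $V\cap\{x_i=a\}$. Since $V\not\subseteq\{x_i=a\}$, every component of the slice has dimension $m-1$. The refined Bézout inequality gives $\delta(V\cap\{x_i=a\})\leq\deg V$. Induction then bounds the box points in each slice by $\deg V\cdot q^{J(m-1)}$, and summing over $a$ gives exactly $\deg V\cdot q^{Jm}$.

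You must work over $\overline{\F_q(u)}$ rather than $\F_q$ because the slices with nonconstant $a$ are not defined over $\F_q$. This route also makes your worry about Noether normalisation over small finite fields irrelevant, since no normalisation is needed.
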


We shall also use the following elementary form of Dirichlet approximation, we include the proof as well.

\begin{Lemma}[Dirichlet approximation]\label{lem:dirichlet}
Let $L\geq 0$ and let $\alpha\in\mathbb T$. There exist polynomials $a,g\in\F_q[u]$, with $g$ monic and $\gcd(a,g)=1$, such that
\[
 0<|g|\leq q^L,\qquad |g\alpha-a|<q^{-L}.
\]
\end{Lemma}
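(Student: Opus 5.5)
The plan is to run the standard pigeonhole argument on the finite set of polynomials of bounded degree, which is the function field analogue of the classical proof of Dirichlet's theorem. Write $\alpha=\sum_{i<0}a_iu^i$. For $L$ not an integer we may replace $L$ by $\lfloor L\rfloor$: the bound $|g|\leq q^{\lfloor L\rfloor}$ is stronger than $|g|\le q^L$. The bound $|g\alpha-a|<q^{-\lfloor L\rfloor}$ still gives $|g\alpha - a|\le q^{-\lfloor L\rfloor-1}<q^{-L}$, since absolute values take values in $q^{\mathbb Z}\cup\{0\}$. So assume $L$ is a nonnegative integer.

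Consider the $q^{L+1}$ polynomials $h\in\F_q[u]$ with $|h|<q^{L+1}$, i.e.\ $\deg h\le L$ (including $h=0$). For each such $h$, look at the fractional part $\{h\alpha\}\in\mathbb T$. Record its first $L$ digits: the coefficients of $u^{-1},\dots,u^{-L}$. This defines a map from the $q^{L+1}$ polynomials to $\F_q^L$, which has $q^L$ elements. By pigeonhole, two distinct $h_1\neq h_2$ have the same digits. Then $g_0=h_1-h_2$ is nonzero with $|g_0|\le q^L$. Letting $a_0$ be the polynomial part of $g_0\alpha$, we get $|g_0\alpha-a_0|<q^{-L}$, because the digits at $u^{-1},\dots,u^{-L}$ cancel. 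Notice that this step uses only linearity of the fractional part over $\F_q[u]$ and needs no inequality from outside the lemma.

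It remains to normalise. First divide $g_0$ and $a_0$ by the leading coefficient of $g_0$, a unit in $\F_q^\times$; this does not change absolute values and makes $g_0$ monic. Then remove common factors: let $c=\gcd(a_0,g_0)$, taken monic, and set $g=g_0/c$, $a=a_0/c$. Then $g$ is monic and $0<|g|\le|g_0|\le q^L$. Moreover $|g\alpha-a|=|g_0\alpha-a_0|/|c|\le|g_0\alpha-a_0|<q^{-L}$, since $|c|\ge1$. The one mild subtlety is the case $a_0=0$, where the gcd is $g_0$ itself. Then $g=1$, $a=0$, and $\gcd(0,1)=1$, and the inequality $|\alpha|\le|g_0\alpha|<q^{-L}$ still holds. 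Thus no step is a genuine obstacle; the only points requiring care are the integrality reduction for $L$ and this degenerate case $a_0=0$.
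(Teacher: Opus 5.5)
Your proof is correct and follows the paper's argument: you apply the pigeonhole principle to the $q^{L+1}$ polynomials of degree at most $L$, sorted by the digits of $\{h\alpha\}$ at $u^{-1},\dots,u^{-L}$, then take the polynomial part of $g_0\alpha$ and cancel the gcd. You also spell out the reduction to integral $L$, the monic normalisation and the case $a_0=0$, which the paper leaves implicit; the one slip is in wording, since the fractional part map is additive ($\F_q$-linear) but not $\F_q[u]$-linear, though additivity is all your difference step needs.
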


\begin{proof}
Consider the $q^{L+1}$ fractional parts
\[
\{\{h\alpha\}: \deg h\leq L\}
\]
of $h\alpha$. Partition $\mathbb T$ according to the coefficients of $u^{-1},\ldots,u^{-L}$. Since two fractional parts lie in the same part, their difference gives nonzero $g$ with $\deg g\leq L$ and $\|g\alpha\|<q^{-L}$. We take $a$ to be the polynomial part of $g\alpha$
and canceling the common divisor of $a$ and $g$ proves the result.
\end{proof}

\section{Selective multiblock Weyl differencing}\label{sec:weyl-differencing}

Let $f\in \F_q[x_1,\cdots, x_n]$ be a nonsingular homogeneous form of degree $d$. There is a unique symmetric $d$-linear form
\[
\Gamma_f: (\F_q^n)^d\to \F_q
\]
with
\[
f(\textbf{x})= \Gamma_f(\textbf{x}, \dots, \textbf{x}).
\]
Note that
\begin{equation}\label{eq:gradient-polarisation}
 \frac{\partial f}{\partial x_i}(\mathbf x)
 =d\,\Gamma_f(\mathbf x,\ldots,\mathbf x,\mathbf e_i).
\end{equation}
Fix positive integers $m_1, \dots, m_s$ such that $m_1+\cdots+m_s=d$ and $P_1, \dots, P_s$. We require that the variable $\textbf{x}_j$ ranges over $B_j= \mathcal B(P_j)$ and $M_j= \#B_j$. Set $M= \prod_j M_j$. Let $\Omega$ be an indexing set; suppose we have a family of polynomials over $\mathbb K_\infty$
\[
R_{\alpha, \omega}(\textbf{x}_1, \dots, \textbf{x}_s)\,\, \mathrm{ for }\,\, (\alpha, \omega) \in \mathbb T\times \Omega.
\]
\begin{definition}\label{def:deficiency}
A monomial $\mathfrak m$ in the scalar coordinates of $\bx_1,\ldots,\bx_s$ is called \emph{$(m_1,\ldots,m_s)$ deficient} if there exists at least one block index $j$ such that
\[
 \deg_{\bx_j}\mathfrak m<m_j,
\]
where $\deg_{\bx_j}$ is the total degree in the $n$ scalar coordinates of $\bx_j$.
\end{definition}

Assume that every monomial of every $R_{\alpha,\omega}$ is $(m_1,\ldots,m_s)$ deficient. Let $c\in\F_q^\times$, and define the multihomogeneous leading form
\begin{equation}\label{eq:G}
 G(\bx_1,\ldots,\bx_s)=c\,\Gamma_f(\underbrace{\bx_1,\ldots,\bx_1}_{m_1\text{ times}},\ldots, \underbrace{\bx_s,\ldots,\bx_s}_{m_s\text{times}}).
\end{equation}
The full phase is
\begin{equation}\label{eq:uniform-phase}
 \Phi_{\alpha,\omega}(\bx_1,\ldots,\bx_s)=\alpha G(\bx_1,\ldots,\bx_s)+R_{\alpha,\omega}(\bx_1,\ldots,\bx_s).
\end{equation}
Define
\begin{equation}\label{eq:T-uniform}
 T_\omega(\alpha)=\sum_{\bx_1\in B_1}\cdots\sum_{\bx_s\in B_s}\psi\bigl(\Phi_{\alpha,\omega}(\bx_1,\ldots,\bx_s)\bigr).
\end{equation}

For each $j$, introduce $m_j$ independent copies
\[
 \bz_{j,1},\ldots,\bz_{j,m_j}\in B_j.
\]
Put
\[c_*=c\prod_{j=1}^s m_j!.\]
 The assumption $p>d$ implies $c_*\neq 0$ in $\F_q$. Define the completely multilinear sum
\begin{align}
 E(\alpha)
 &=\sum_{\substack{\bz_{j,\ell}\in B_j\\ 1\leq j\leq s,\ 1\leq\ell\leq m_j}} \psi\Bigl(\alpha c_*\, \Gamma_f(\bz_{1,1},\ldots,\bz_{1,m_1},\ldots, \bz_{s,1},\ldots,\bz_{s,m_s})\Bigr).
 \label{eq:E}
\end{align}
In particular,
\[
E(0)=\prod_{j=1}^s M_j^{m_j}.
\]

We prove the following uniform multiblock Weyl differencing.

\begin{proposition}[Multiblock Weyl differencing]\label{prop:uniform}
Under the assumptions above, for every $\alpha\in\mathbb T$ and every
$\omega\in\Omega$ one has
\[
|T_\omega(\alpha)|^{2^{d-1}} \leq M^{2^{d-1}}\frac{E(\alpha)}{E(0)}.
\]
Moreover, $E(\alpha)$ is a nonnegative integer. The above estimate is uniform in $\omega$ and in all coefficients of $R_{\alpha,\omega}$.
\end{proposition}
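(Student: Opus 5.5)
We will run $d-1$ successive Cauchy--Schwarz/differencing steps, one for each of the $d-1$ ``extra'' copies, organised block by block. The model is the classical Weyl differencing for a single form, adapted to several blocks as in Browning--Yamagishi for $s\le 2$.

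\textbf{Step 1: differencing a block.} Order the copies as $(j,\ell)$ with $1\le j\le s$ and $1\le \ell\le m_j$, and difference one copy at a time. For block $j$, fix the other blocks and apply Cauchy--Schwarz to the sum over $\bx_j$. A single squaring turns
\[
|\textstyle\sum_{\bx_j}\psi(\Phi)|^2
\]
into
\[
\textstyle\sum_{\bh}\sum_{\bx_j}\psi\bigl(\Phi(\bx_j+\bh)-\Phi(\bx_j)\bigr),
\]
where $\bh$ and $\bx_j+\bh$ range over $B_j$. Since $B_j=\mathcal B(P_j)$ is an additive group, the shifted ranges are again $B_j$, and no boundary terms appear.

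\textbf{Step 2: iterating and tracking degrees.} After $m_j$ differencings in block $j$ (with shifts $\bz_{j,1},\dots,\bz_{j,m_j}$), the finite difference operator $\Delta_{\bz_{j,1}}\cdots\Delta_{\bz_{j,m_j}}$ acts on the phase.
\begin{itemize}
\item Any monomial with $\deg_{\bx_j}<m_j$ is annihilated.
\item A monomial with $\deg_{\bx_j}=m_j$ becomes independent of $\bx_j$. It gives $m_j!$ times the polarisation in the shifts.
\item Monomials of higher $\bx_j$-degree cannot occur in $G$. By deficiency they may occur in $R$, but a deficient monomial is deficient in some block. When all blocks are eventually differenced, every deficient monomial is killed: for a monomial with $\deg_{\bx_j}\mathfrak m<m_j$, the $m_j$-fold difference in block $j$ kills it regardless of its degrees in the other blocks, since the difference in block $j$ commutes with the others.
\end{itemize}
The total number of differencings is $d$. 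Each of the first $d-1$ steps uses one squaring (Cauchy--Schwarz with Hölder bookkeeping, giving the exponent $2^{d-1}$). The last copy is not squared: once the phase is linear in the final variable, we sum over it directly. Concretely, after $d-1$ differencings the phase is linear in the remaining variable $\bx_s$ (the copy $\bz_{s,m_s}$) plus terms independent of it. Taking absolute values and then summing the linear phase over the box gives a nonnegative count. Equivalently, we may keep the sum as $\sum\psi(\alpha c_*\Gamma_f(\dots))$, which equals $M_s$ times the number of tuples $(\bz_{j,\ell})_{(j,\ell)\neq(s,m_s)}$ whose linear-form condition $\|\alpha c_*\Gamma_f(\dots,\cdot)\|<q^{-P_s}$ holds coordinatewise, by \eqref{sum-orth-id}. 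This is exactly $E(\alpha)$, which shows that $E(\alpha)$ is a nonnegative integer.

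\textbf{Step 3: normalisations.} At each squaring we apply Cauchy--Schwarz over the $M/M_j$ outer variables and the earlier shifts. This produces factors of $M_j$ which assemble to
\[
|T|^{2^{d-1}}\le M^{2^{d-1}}\prod_j M_j^{-m_j}\,E(\alpha),
\]
that is, the stated bound with $E(0)=\prod_j M_j^{m_j}$. Uniformity in $\omega$ and in the coefficients of $R$ follows because $R$ disappears entirely from the final expression.

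\textbf{Main obstacle.} The delicate point is the bookkeeping of Hölder exponents when squaring in successive blocks of different sizes, so that the normalisation comes out as exactly $M^{2^{d-1}}/E(0)$. A related issue is that the residual non-leading terms of $R$ with excess degree in an already differenced block are killed only later. I would handle this by differencing all blocks in order and showing inductively that after the $k$-th step the phase is $\alpha$ times a polarised leading part plus terms each deficient in some block not yet fully differenced. The sign and the $m_j!$ constants come out as $c_*$, which is nonzero since $p>d$.
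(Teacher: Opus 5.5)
Your proposal is correct and follows the paper's proof essentially step for step: $d-1$ iterated Cauchy--Schwarz differencings ($m_j$ in each block $j<s$ and $m_s-1$ in block $s$), after which the phase is affine in $\bx_s$ with linear coefficient $\alpha c_*\Gamma_f(\bh_{1,1},\ldots,\bh_{s,m_s-1},\cdot)$ depending only on the shifts, so the $\bx_s$-average factors out, the remaining unimodular factor is discarded, and orthogonality identifies the resulting count with $E(\alpha)/M_s$. One correction to your wording: not every deficient monomial of $R_{\alpha,\omega}$ is killed---those deficient only in block $s$ survive as phases independent of $\bx_s$ (possibly depending on $\bx_1,\ldots,\bx_{s-1}$ and the shifts), and it is the absolute-value step you describe, not a $d$-th difference, that removes them; likewise the normalisation comes out automatically if you work with probability averages throughout, so no H\"older bookkeeping is needed.
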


The rest of this section is devoted to the proof of the above proposition. For $1\leq j\leq s$ and $\bh\in B_j$, define the difference operator in block $j$ by

\begin{align}
 \Delta_{\bh}^{(j)}H(\bx_1,\ldots,\bx_s) &=H(\bx_1,\ldots,\bx_j+\bh,\ldots,\bx_s)-H(\bx_1,\ldots,\bx_j,\ldots,\bx_s).
 \label{eq:difference}
\end{align}
Note that each $B_j$ is an additive group. We first prove a few preliminary lemmas, starting with basic Weyl differencing. For a finite nonempty set $A$ and a function $F:A\to \mathbb C$, we use the normalized average notation
\[
\mathbb E_{x\in A} F(x):=\frac{1}{\#A}\sum_{x\in A}F(x).
\]
For several variables, $\mathbb E_{x_1,\ldots,x_s}$ denotes the corresponding normalized average over the product of their respective ranges.

\begin{Lemma}\label{lem:one-step}
Let $H:B_1\times\cdots\times B_s\to \mathbb K_\infty$ be any function, and fix a block index $j$. Then
\[
 \left|\E_{\bx_1,\ldots,\bx_s}\psi(H(\bx_1,\ldots,\bx_s))\right|^2 \leq \E_{\bh\in B_j} \left| \E_{\bx_1,\ldots,\bx_s} \psi\bigl(\Delta_{\bh}^{(j)}H(\bx_1,\ldots,\bx_s)\bigr) \right|.
\]
\end{Lemma}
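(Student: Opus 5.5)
This is the standard van der Corput/Weyl step, carried out only in the block $\bx_j$ while the other blocks are treated as outer variables. Write $\by=(\bx_i)_{i\neq j}$ for the remaining blocks and $S(\by)=\E_{\bx_j\in B_j}\psi(H(\bx_1,\ldots,\bx_s))$. Then the left-hand side equals $|\E_{\by}S(\by)|^2$. By Cauchy--Schwarz (equivalently, Jensen for the convex function $|\cdot|^2$ under the normalized average),
\[
\Bigl|\E_{\by}S(\by)\Bigr|^2\leq \E_{\by}|S(\by)|^2 .
\]

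Next we expand $|S(\by)|^2$ as a double average over $\bx_j,\bx_j'\in B_j$ of $\psi(H(\ldots,\bx_j',\ldots))\overline{\psi(H(\ldots,\bx_j,\ldots))}$. The character $\psi$ is additive and unitary, so $\overline{\psi(a)}=\psi(-a)$. Since $B_j=\mathcal B(P_j)$ is an additive group, the substitution $\bx_j'=\bx_j+\bh$ with $\bh\in B_j$ is a bijection of $B_j\times B_j$. This gives
\[
|S(\by)|^2=\E_{\bh\in B_j}\E_{\bx_j\in B_j}\psi\bigl(\Delta^{(j)}_{\bh}H(\bx_1,\ldots,\bx_s)\bigr).
\]
This is the one point where the group structure of the box is used. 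Because no cutoff is needed, there is no shifted-range error term, unlike the integer setting.

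Averaging over $\by$ and interchanging the finite averages gives
\[
\E_{\by}|S(\by)|^2=\E_{\bh\in B_j}\E_{\bx_1,\ldots,\bx_s}\psi\bigl(\Delta^{(j)}_{\bh}H\bigr).
\]
The left side is real and nonnegative. So the right side is real, and by the triangle inequality it is bounded by $\E_{\bh}\bigl|\E_{\bx_1,\ldots,\bx_s}\psi(\Delta^{(j)}_{\bh}H)\bigr|$, which is the claim.

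There is no real obstacle here. The only care needed is to confirm that the change of variables stays inside $B_j$, which holds because $B_j$ is a subgroup of $\F_q[u]^n$, and to track the conjugation so that the phase is $H(\bx_j+\bh)-H(\bx_j)$ rather than its negative. Either sign gives the same absolute value, since $\bh\mapsto-\bh$ preserves $B_j$.
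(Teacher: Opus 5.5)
Your proof is correct and matches the paper's argument step for step. Both apply Cauchy--Schwarz over the blocks other than $\bx_j$, expand the square using the bijective substitution $\bx_j'=\bx_j+\bh$ (valid because $B_j$ is an additive group), and finish with the triangle inequality on the $\bh$-average, using that the full expression is real and nonnegative.
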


\begin{proof}
Let $\bx'$ denote all blocks other than $\bx_j$. By Cauchy--Schwarz in the $\bx'$-average,
\begin{align}
 \left|\E_{\bx'}\E_{\bx_j}\psi(H(\bx_j,\bx'))\right|^2 &\leq \E_{\bx'} \left|\E_{\bx_j}\psi(H(\bx_j,\bx'))\right|^2.
 \label{eq:CS-outer}
\end{align}
For fixed $\bx'$, expanding we get
\[
 \left|\E_{\bx_j}\psi(H(\bx_j,\bx'))\right|^2 =\frac{1}{M_j^2} \sum_{\bx_j,\by_j\in B_j} \psi\bigl(H(\bx_j,\bx')-H(\by_j,\bx')\bigr).
\]
Note that the change of variables
\[
 (\bx_j,\by_j)=(\by_j+\bh,\by_j)
\]
is a bijection from $B_j\times B_j$ to itself. Hence
\[
 \left|\E_{\bx_j}\psi(H(\bx_j,\bx'))\right|^2 =\E_{\bh\in B_j}\E_{\by_j\in B_j} \psi\bigl(\Delta_{\bh}^{(j)}H(\by_j,\bx')\bigr).
\]
Substituting this into \eqref{eq:CS-outer} we obtain
\[
\left|\E_{\bx}\psi(H(\bx))\right|^2\leq \E_{\bh\in B_j} \E_{\bx} \psi\bigl(\Delta_{\bh}^{(j)}H(\bx)\bigr).
\]
The right side as a whole is real and nonnegative, but the inner average for a fixed $\bh$ need not be. Applying the triangle inequality to the $\bh$-average proves the lemma.
\end{proof}
Iterating the preceding lemma a number of times to any prescribed sequence of blocks gives the following form.

\begin{Lemma}
\label{lem:iterated}
Let $i_1,\ldots,i_k\in\{1,\ldots,s\}$ be any sequence of block indices. For $1\leq r\leq k$, let $\bh_r\in B_{i_r}$ and put
\[
 \mathcal D_r=\Delta_{\bh_r}^{(i_r)}\cdots\Delta_{\bh_1}^{(i_1)}.
\]
Then, for any function $H:B_1\times\cdots\times B_s\to \mathbb K_\infty$,
\[
 \left|\E_{\bx}\psi(H(\bx))\right|^{2^k}\leq \E_{\bh_1\in B_{i_1}}\cdots \E_{\bh_k\in B_{i_k}} \left|\E_{\bx}\psi(\mathcal D_kH(\bx))\right|.
\]
\end{Lemma}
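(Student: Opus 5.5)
We argue by induction on $k$, applying Lemma~\ref{lem:one-step} once per step. The case $k=1$ is exactly Lemma~\ref{lem:one-step} with $j=i_1$. For the inductive step, suppose the bound holds for the sequence $i_1,\ldots,i_{k-1}$:
\[
 \left|\E_{\bx}\psi(H(\bx))\right|^{2^{k-1}}\leq \E_{\bh_1}\cdots\E_{\bh_{k-1}} \left|\E_{\bx}\psi(\mathcal D_{k-1}H(\bx))\right|.
\]
Squaring both sides and using Cauchy--Schwarz (equivalently, Jensen for the convex map $t\mapsto t^2$) over the normalized average in $(\bh_1,\ldots,\bh_{k-1})$ gives
\[
 \left|\E_{\bx}\psi(H(\bx))\right|^{2^{k}}\leq \E_{\bh_1}\cdots\E_{\bh_{k-1}} \left|\E_{\bx}\psi(\mathcal D_{k-1}H(\bx))\right|^2.
\]

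Now fix $\bh_1,\ldots,\bh_{k-1}$. The function $\mathcal D_{k-1}H$ is again a function $B_1\times\cdots\times B_s\to\mathbb K_\infty$, since each $B_j$ is an additive group. Its translates by $\bh_r\in B_{i_r}$ are therefore defined on the same box, as Lemma~\ref{lem:one-step} implicitly requires. So Lemma~\ref{lem:one-step} applies to it with $j=i_k$:
\[
 \left|\E_{\bx}\psi(\mathcal D_{k-1}H(\bx))\right|^2\leq \E_{\bh_k\in B_{i_k}}\left|\E_{\bx}\psi\bigl(\Delta^{(i_k)}_{\bh_k}\mathcal D_{k-1}H(\bx)\bigr)\right|.
\]
By definition, $\Delta^{(i_k)}_{\bh_k}\mathcal D_{k-1}=\mathcal D_k$. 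Substituting this bound inside the $(\bh_1,\ldots,\bh_{k-1})$-average gives the claim for $k$.

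There is no real obstacle. The only points to check are the following.
\begin{itemize}
 \item Lemma~\ref{lem:one-step} holds for arbitrary $H$, so it may be applied to $\mathcal D_{k-1}H$ uniformly in the frozen shifts $\bh_1,\ldots,\bh_{k-1}$.
 \item The squaring step uses only that the averages are normalized, so that $(\E X)^2\leq \E X^2$ for nonnegative $X$.
\end{itemize}
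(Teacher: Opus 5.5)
Your proposal is correct and follows the paper's proof essentially verbatim. Both argue by induction on $k$: square the inductive bound, use Cauchy--Schwarz over the normalized increment average, then apply Lemma~\ref{lem:one-step} to $\mathcal D_{k-1}H$ in block $i_k$ for each fixed tuple of earlier increments. You also explicitly check that $\mathcal D_{k-1}H$ is still defined on $B_1\times\cdots\times B_s$ because each $B_j$ is an additive group, a point the paper leaves implicit.
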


\begin{proof}
We induct on $k$. The case $k=1$ is Lemma \ref{lem:one-step}. Suppose the result is known for $k$. Write
\[
 A_k(\bh_1,\ldots,\bh_k) =\E_{\bx}\psi(\mathcal D_kH(\bx)).
\]
By the induction hypothesis,
\[
 \left|\E_{\bx}\psi(H(\bx))\right|^{2^k} \leq \E_{\bh_1,\ldots,\bh_k}|A_k(\bh_1,\ldots,\bh_k)|.
\]
Squaring both sides, since the increment averages are probability averages, Cauchy--Schwarz gives
\begin{align}
 \left|\E_{\bx}\psi(H(\bx))\right|^{2^{k+1}}
 &\leq
 \E_{\bh_1,\ldots,\bh_k}|A_k(\bh_1,\ldots,\bh_k)|^2.
 \label{eq:induction-CS}
\end{align}
For each fixed tuple $(\bh_1,\ldots,\bh_k)$, apply Lemma \ref{lem:one-step} to the phase $\mathcal D_kH$ and to block $i_{k+1}$. This gives
\begin{align*}
 |A_k(\bh_1,\ldots,\bh_k)|^2 &\leq \E_{\bh_{k+1}\in B_{i_{k+1}}}\left|\E_{\bx}\psi\bigl(\Delta_{\bh_{k+1}}^{(i_{k+1})}\mathcal D_kH(\bx)\bigr)\right|.
\end{align*}
Substituting into \eqref{eq:induction-CS} proves the statement for $k+1$, completing the inductive step.
\end{proof}

Isolating monomials we note that for each $j$, $\Delta_\bh^{(j)}$ is a degree reducing operator in $\bx_j$. Next, we prove an elementary lemma on complete and almost complete differences.

\begin{Lemma}\label{lem:multilinear-difference}
Let $V$ be a vector space over a field, let $W$ be another vector space, and let $\Lambda:V^m\to W$ be symmetric and $m$-linear. Set
\[
 Q(\bx)=\Lambda(\bx,\ldots,\bx).
\]
Then
\begin{align}
 \Delta_{\bh_m}\cdots\Delta_{\bh_1}Q(\bx) &=m!\,\Lambda(\bh_1,\ldots,\bh_m),
 \label{eq:complete-diff}
\end{align}
and
\begin{align}
 \Delta_{\bh_{m-1}}\cdots\Delta_{\bh_1}Q(\bx) &=m!\,\Lambda(\bh_1,\ldots,\bh_{m-1},\bx) +C(\bh_1,\ldots,\bh_{m-1}),
 \label{eq:almost-complete-diff}
\end{align}
where $C$ is independent of $\bx$.
\end{Lemma}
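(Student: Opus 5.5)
We prove both identities by expanding $Q(\bx+\bh)$ multilinearly and tracking, at each differencing step, how many slots of $\Lambda$ still carry $\bx$. The intermediate claim is the following. For $0\leq k\leq m$, the iterated difference $\Delta_{\bh_k}\cdots\Delta_{\bh_1}Q(\bx)$ equals
\[
\frac{m!}{(m-k)!}\Lambda(\bh_1,\ldots,\bh_k,\bx,\ldots,\bx)+R_k(\bx),
\]
where $R_k$ is a sum of terms $\Lambda(\ldots)$ in which at most $m-k-1$ slots are occupied by $\bx$. All remaining slots are filled by the $\bh_i$. In other words, $R_k$ is a polynomial of degree at most $m-k-1$ in $\bx$.

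The case $k=0$ is trivial. For the inductive step, first apply $\Delta_{\bh}$ to a term $\Lambda(\mathbf y_1,\ldots,\mathbf y_{m-\ell},\bx,\ldots,\bx)$ containing $\ell$ copies of $\bx$, where the $\mathbf y_i$ are fixed. By multilinearity and symmetry,
\[
\Lambda(\ldots,\bx+\bh,\ldots,\bx+\bh)=\sum_{i=0}^{\ell}\binom{\ell}{i}\Lambda(\ldots,\underbrace{\bh,\ldots,\bh}_{i},\underbrace{\bx,\ldots,\bx}_{\ell-i}).
\]
Subtracting the $i=0$ term leaves
\[
\ell\,\Lambda(\mathbf y,\bh,\bx,\ldots,\bx)
\]
with $\ell-1$ copies of $\bx$, plus terms with at most $\ell-2$ copies of $\bx$.

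Now apply $\Delta_{\bh_{k+1}}$ to the expression for step $k$. The main term has $\ell=m-k$ copies of $\bx$. It therefore picks up the factor $(m-k)$, so its coefficient becomes
\[
\frac{m!}{(m-k)!}(m-k)=\frac{m!}{(m-k-1)!},
\]
and $\bh_{k+1}$ enters one slot. The other terms, together with every term of $R_k$, lose at least one copy of $\bx$. So they land in $R_{k+1}$, which has degree at most $m-k-2$. A term with no copy of $\bx$ is killed by the difference operator. Symmetry of $\Lambda$ lets us reorder arguments freely throughout.

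Taking $k=m$ gives $R_m$ of degree at most $-1$, so $R_m=0$. This is \eqref{eq:complete-diff}, with coefficient $m!/0!=m!$.

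Taking $k=m-1$ gives the main term $m!\,\Lambda(\bh_1,\ldots,\bh_{m-1},\bx)$. The remainder $R_{m-1}$ has degree at most $0$ in $\bx$, so it is a function $C(\bh_1,\ldots,\bh_{m-1})$ alone. This is \eqref{eq:almost-complete-diff}.

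No step is a real obstacle; the only point needing care is the bookkeeping of the binomial coefficient $\ell$ from the expansion.
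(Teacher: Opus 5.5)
Your argument is correct, but it is organized differently from the paper's. The paper does not induct. It writes the $k$-fold difference via the inclusion--exclusion formula $\sum_{I\subseteq\{1,\ldots,k\}}(-1)^{k-|I|}Q(\bx+\sum_{i\in I}\bh_i)$ and expands every value of $Q$ multilinearly. It then observes that any term omitting some increment $\bh_r$ has total coefficient zero, because the subsets containing $r$ cancel those not containing $r$. Both identities then follow by counting slots. For $k=m$, each surviving term uses every increment exactly once. For $k=m-1$, the surviving $\bx$-dependent terms use each increment once and $\bx$ once. In each case there are $m!$ arrangements.

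You instead prove, by induction on $k$ and using only the one-step binomial expansion, a sharper intermediate statement: the $k$-fold difference equals $\frac{m!}{(m-k)!}\Lambda(\bh_1,\ldots,\bh_k,\bx,\ldots,\bx)$ plus terms with at most $m-k-1$ copies of $\bx$.

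What your route buys is an explicit description of every intermediate stage. It also gives a direct proof of the principle that each difference operator lowers the degree in $\bx$ by one. The paper only states this informally just before the lemma, yet relies on it again to annihilate deficient monomials in the proof of Proposition~\ref{prop:uniform}. What the paper's route buys is a non-inductive, symmetric argument that identifies all surviving terms at once (each must contain every increment). It is slightly shorter and avoids carrying lower-order remainders from step to step.

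Both arguments are characteristic-free, since the coefficients $\binom{\ell}{i}$ and $m!$ enter only as integer multiples.
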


\begin{proof}
The inclusion-exclusion formula for iterated differences is
\begin{align}
 \Delta_{\bh_k}\cdots\Delta_{\bh_1}Q(\bx) &=\sum_{I\subseteq\{1,\ldots,k\}} (-1)^{k-|I|} Q\left(\bx+\sum_{i\in I}\bh_i\right).
 \label{eq:inc-exc}
\end{align}
We expand each value of $Q$ by multilinearity. Consider a term in the expansion and fix an increment $\bh_r$. If the term does not contain $\bh_r$, then its total coefficient in \eqref{eq:inc-exc} is zero, because the contributions from subsets $I$ containing $r$ cancel those from subsets not containing $r$. Hence a term can survive only if it contains every increment $\bh_1,\ldots,\bh_k$ at least once.

If $k=m$, the total number of multilinear slots is $m$, so every surviving term contains each of the $m$ increments exactly once and contains no $\bx$. The symmetry of $\Lambda$ shows that all such terms are equal, and there are $m!$ permutations of the increments among the slots. This proves \eqref{eq:complete-diff}.

If $k=m-1$, every surviving term that still depends on $\bx$ contains each of the $m-1$ increments exactly once and contains $\bx$ in the remaining slot. There are $m!$ ways to place these $m$ entries among the slots, and all resulting values are equal by symmetry. This gives the first term on the right of \eqref{eq:almost-complete-diff}. Every other surviving term contains no $\bx$ and can therefore be absorbed into the function $C$.
\end{proof}

We are now ready to prove Proposition \ref{prop:uniform}.

\begin{proof}[Proof of Proposition \ref{prop:uniform}]
 We have that
 \[
 \E_{\bx_1,\ldots,\bx_s} \psi(\Phi_{\alpha,\omega}(\bx_1,\ldots,\bx_s)) =\frac{T_\omega(\alpha)}{M}.
 \]
 We distinguish the last block corresponding to $s$. For blocks with $j<s$, we choose exactly $m_j$ increments
\[
 \bh_{j,1},\ldots,\bh_{j,m_j}\in B_j.
\]
In block $s$, choose exactly $m_s-1$ increments
\[
 \bh_{s,1},\ldots,\bh_{s,m_s-1}\in B_s.
\]
Here we use the convention that this last list is empty if $m_s=1$. The total number of increments is
\[
 \sum_{j=1}^{s-1}m_j+(m_s-1)=d-1.
\]
We consider the combined difference operator
\begin{align}
 \mathcal D &=\left(\prod_{\ell=1}^{m_s-1} \Delta_{\bh_{s,\ell}}^{(s)}\right) \prod_{j=1}^{s-1} \left(\prod_{\ell=1}^{m_j}\Delta_{\bh_{j,\ell}}^{(j)}\right).
 \label{eq:D}
\end{align}
Note that the difference operators commute, so their written order is irrelevant.

The number of possible increment tuples is
\begin{equation}\label{eq:D-size}
 D_0=\left(\prod_{j=1}^{s-1}M_j^{m_j}\right)M_s^{m_s-1}.
\end{equation}
Thus the normalised increment average is
\[
 \E_{\bh}[-]=\frac{1}{D_0}\sum_{\substack{\bh_{j,\ell}\in B_j\\ j<s,\ 1\leq\ell\leq m_j\\ j=s,\ 1\leq\ell\leq m_s-1}}-.
\]

Applying Lemma \ref{lem:iterated} to the phase $\Phi_{\alpha,\omega}$ and to this sequence of $d-1$ block differences. We obtain
\begin{align}
 \left|\frac{T_\omega(\alpha)}{M}\right|^{2^{d-1}} &\leq \E_{\bh} \left| \E_{\bx_1,\ldots,\bx_s} \psi\bigl(\mathcal D\Phi_{\alpha,\omega}(\bx_1,\ldots,\bx_s)\bigr) \right|.
 \label{eq:after-weyl}
\end{align}

Next we compute the differenced leading term $\mathcal DG$. In every block $j<s$, the form $G$ has degree exactly $m_j$, and we apply exactly $m_j$ differences. In block $s$, it has degree exactly $m_s$, and we apply $m_s-1$ differences. Applying Lemma \ref{lem:multilinear-difference} successively in the different blocks gives
\begin{align}
 \mathcal DG(\bx_1,\ldots,\bx_s) &=c_*\,\Gamma_f(\bh_{1,1},\ldots,\bh_{1,m_1},\ldots, \bh_{s,1},\ldots,\bh_{s,m_s-1},\bx_s)+C_G(\bh),
 \label{eq:DG}
\end{align}
where $C_G$ is independent of every original variable and $c_*$ is given by \eqref{eq:E}.

We now estimate the differenced remainder. Fix a monomial $\mathfrak m$ occurring in $R_{\alpha,\omega}$. By the deficiency hypothesis, there exists $j$ such that
\[
 \deg_{\bx_j}\mathfrak m<m_j.
\]
If this happens for some $j<s$, then the $m_j$ differences taken in block $j$ annihilate the monomial. Therefore
\[
 \mathcal D\mathfrak m=0.
\]
Otherwise, the monomial is not deficient in any block $j<s$. Its required deficiency must then occur in block $s$, so
\[
 \deg_{\bx_s}\mathfrak m<m_s.
\]
After the $m_s-1$ differences in block $s$, the resulting degree in $\bx_s$ is at most
\[
 \deg_{\bx_s}\mathfrak m-(m_s-1)\leq 0.
\]
Thus the differenced monomial is independent of $\bx_s$. It may still depend on $\bx_1,\ldots,\bx_{s-1}$ and on all increments; that dependence is harmless.

Summing over the monomials of $R_{\alpha,\omega}$, we conclude that
\begin{equation}\label{eq:DR}
 \mathcal DR_{\alpha,\omega}= C_{R,\alpha,\omega}(\bx_1,\ldots,\bx_{s-1};\bh),
\end{equation}
where the right side is independent of $\bx_s$. Combining the leading and remainder terms we deduce that
\begin{align}
 \mathcal D\Phi_{\alpha,\omega}(\bx_1,\ldots,\bx_s) &=\alpha c_*\,\Gamma_f(\bh_{1,1},\ldots,\bh_{s,m_s-1},\bx_s) +C_{\alpha,\omega}(\bx_1,\ldots,\bx_{s-1};\bh),
 \label{eq:Dphi}
\end{align}
where $C_{\alpha,\omega}$ is independent of $\bx_s$.

Note that fixing an increment $\bh$ we get
\begin{align}
 &\E_{\bx_1,\ldots,\bx_s} \psi\bigl(\mathcal D\Phi_{\alpha,\omega}(\bx_1,\ldots,\bx_s)\bigr) \notag\\ &\quad= \E_{\bx_1,\ldots,\bx_{s-1}} \left[ \psi\bigl(C_{\alpha,\omega}(\bx_1,\ldots,\bx_{s-1};\bh)\bigr) \E_{\bx_s}\psi\bigl(\alpha c_*\,\Gamma_f(
 \bh_{1,1},\ldots,\bh_{s,m_s-1},\bx_s)\bigr) \right].
 \label{eq:factor-inner}
\end{align}
The $\bx_s$-average is independent of $\bx_1,\ldots,\bx_{s-1}$. Taking absolute values and using $|\psi(C_{\alpha,\omega})|=1$, we get
\begin{align}
 \left| \E_{\bx_1,\ldots,\bx_s} \psi\bigl(\mathcal D\Phi_{\alpha,\omega}\bigr) \right|\leq \left|\E_{\bx_s\in B_s} \psi\bigl(\alpha c_*\,\Gamma_f( \bh_{1,1},\ldots,\bh_{s,m_s-1},\bx_s)\bigr)\right|.
 \label{eq:discard-remainder}
\end{align}

Using linearity and orthogonality \eqref{sum-orth-id} we deduce that
\begin{align}
 \E_{\bx_s\in B_s} \psi\bigl(\alpha c_*\,\Gamma_f( \bh_{1,1},\ldots,\bh_{s,m_s-1},\bx_s)\bigr) &=I_\alpha(\bh),
 \label{eq:I}
\end{align}
where $I_\alpha(\bh)$ is the indicator of the $n$ simultaneous conditions
\begin{equation}\label{eq:dual-conditions}
 \left\|\alpha c_*\Gamma_f(\bh_{1,1},\ldots,\bh_{s,m_s-1},\be_i)\right\|<q^{-P_s} \qquad (1\leq i\leq n).
\end{equation}
Therefore, we obtain
\begin{equation}\label{eq:ratio-N}
 \left|\frac{T_\omega(\alpha)}{M}\right|^{2^{d-1}} \leq \E_{\bh}I_\alpha(\bh) =\frac{N(\alpha)}{D_0},
\end{equation}
where $N(\alpha)$ is the number of increment tuples satisfying \eqref{eq:dual-conditions}. We now identify $E(\alpha)$ in the above bound. Regarding the last variable $\bz_{s,m_s}$ as the last summation variable and fixing all the other $d-1$ variables. By definition \eqref{eq:E}, the inner sum is
\[
 \sum_{\bz_{s,m_s}\in B_s} \psi\Bigl(\alpha c_*\Gamma_f(\bz_{1,1},\ldots,\bz_{s,m_s-1},\bz_{s,m_s})\Bigr).
\]
By orthogonality again, this sum is $M_s$ if the fixed $d-1$ variables satisfy the conditions \eqref{eq:dual-conditions}, and is zero otherwise. Consequently,
\begin{equation}\label{eq:E=N}
 E(\alpha)=M_sN(\alpha).
\end{equation}
When $\alpha=0$, every increment tuple satisfies the dual conditions, so $N(0)=D_0$. Combining \eqref{eq:ratio-N} and \eqref{eq:E=N} we get
\[
|T_\omega(\alpha)|^{2^{d-1}} \leq M^{2^{d-1}}\frac{E(\alpha)}{E(0)}.
\]
This completes the proof.
\end{proof}

\section{A mean value estimate}\label{sec:mean-value}

This section is devoted to estimating the multilinear sum $E(\alpha)$. We deal with a slightly more general sum, but we stick with our previous notation. Let $P_1,\ldots,P_d\geq 1$ be arbitrary integers, put
\begin{equation}\label{eq:H-def}
 H=P_1+\cdots+P_d, \qquad P_{\min}=\min_i P_i,
\end{equation}
and let
\begin{equation}\label{eq:E-general}
 \tilde E(\alpha)= \sum_{\mathbf z_1\in\mathcal B(P_1)}\cdots \sum_{\mathbf z_d\in\mathcal B(P_d)} \psi\bigl(\alpha\Lambda(\mathbf z_1,\ldots,\mathbf z_d)\bigr),
\end{equation}
where $\Lambda=c\Gamma_f$ for some $c\in\F_q^\times$.
 Define the affine variety
\[
 Z=\left\{(\mathbf z_1,\ldots,\mathbf z_{d-1})\in\mathbb A^{(d-1)n}: \Lambda(\mathbf z_1,\ldots,\mathbf z_{d-1},\mathbf e_i)=0 \ \text{for all }i\right\},
\]
and set
\[
 \sigma=(d-1)n-\dim Z.
\]

\begin{Lemma}\label{lem:sigma}
If $f$ is nonsingular, then $\sigma\geq n$.
\end{Lemma}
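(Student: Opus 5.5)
We must show $\dim Z\leq (d-2)n$. The plan is to intersect $Z$ with the diagonal-type linear subspace
\[
\Delta=\{(\mathbf z_1,\ldots,\mathbf z_{d-1}): \mathbf z_1=\cdots=\mathbf z_{d-1}\}\cong\mathbb A^n,
\]
which has codimension $(d-2)n$ in $\mathbb A^{(d-1)n}$. On $\Delta$, with $\mathbf z_1=\cdots=\mathbf z_{d-1}=\mathbf x$, the defining equations of $Z$ become $c\,\Gamma_f(\mathbf x,\ldots,\mathbf x,\mathbf e_i)=0$ for $1\leq i\leq n$. By \eqref{eq:gradient-polarisation} this is $(c/d)\,\partial f/\partial x_i(\mathbf x)=0$, which makes sense since $p>d$ makes $d$ invertible and $c\neq 0$.

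Because $f$ is nonsingular, the partial derivatives $\partial f/\partial x_i$ have no common zero in $\mathbb P^{n-1}$ over $\overline{\F}_q$. Here I would note that, since $p>d$ (so $p\nmid d$), the Euler identity $d\,f=\sum x_i\partial_i f$ shows that vanishing of all partials forces $f=0$ as well, so the usual notion of nonsingularity applies. Hence the affine cone $\{\nabla f=0\}\subset\mathbb A^n$ is just the origin, and $Z\cap\Delta=\{0\}$ set-theoretically, which has dimension $0$.

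Next I apply the affine dimension theorem (intersection with a linear subspace): every irreducible component of $Z\cap\Delta$ has dimension at least $\dim Z-(d-2)n$. Both $Z$ and $\Delta$ are cones containing the origin, so $Z\cap\Delta$ is nonempty, and the component of $Z$ of maximal dimension also passes through $0$ because it is conical. Applying the theorem to that component gives $0\geq \dim Z-(d-2)n$, i.e.\ $\dim Z\leq (d-2)n$, and therefore $\sigma=(d-1)n-\dim Z\geq n$.

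The only delicate step is justifying the intersection inequality over the algebraically closed field. For this I would pass to $\overline{\F}_q$, which does not change dimension, and use that each top-dimensional component of $Z$ is a cone, hence meets $\Delta$ at $0$. The rest is formal.
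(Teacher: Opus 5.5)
Your proposal is correct and follows essentially the same route as the paper: intersect the cone $Z$ with the diagonal, use \eqref{eq:gradient-polarisation} together with nonsingularity and $p>d$ to get $Z\cap D=\{0\}$, then apply the affine dimension theorem to a top-dimensional component of $Z$, which passes through the origin because it is conical. Your Euler-identity remark simply spells out the role of $p>d$, which the paper leaves implicit.
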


\begin{proof}
Let
\[
 D=\{(\mathbf x,\ldots,\mathbf x):\mathbf x\in\mathbb A^n\} \subset\mathbb A^{(d-1)n}.
\]
By \eqref{eq:gradient-polarisation}, the intersection $Z\cap D$ consists of vectors $\mathbf x$ satisfying $\nabla f(\mathbf x)=0$. Since $f$ is a nonsingular homogeneous form and $p>d$, only such affine vector is $\mathbf 0$. Thus $\dim(Z\cap D)=0$. Since $Z$ is a cone every irreducible component meets $D$ at origin. The affine dimension theorem gives the desired bound $\sigma\geq n$.
\end{proof}

The next lemma proves a major and minor arc dichotomy for the sum under consideration.

\begin{Lemma}[Major and Minor arcs]\label{lem:maj-min}
Let $1\leq J\leq P_{\min}$. Then one of the following alternatives holds:
\begin{enumerate}
\item the following bound
\[
 |\tilde E(\alpha)|\leq (d-1)^n \tilde E(0)q^{-\sigma J};
\]
\item there exist coprime $a,g\in\F_q[u]$, with $g$ monic, such that
\[
 0<|g|\leq q^{(d-1)(J-1)},
 \qquad
 |g\alpha-a|<q^{-H+(d-1)J}.
\]
\end{enumerate}
\end{Lemma}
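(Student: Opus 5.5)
We follow the standard function field argument of Browning--Vishe and Browning--Yamagishi, adapted to unequal box sizes. The starting point is the computation already made in the proof of Proposition~\ref{prop:uniform}. Summing over $\mathbf z_d$ by orthogonality \eqref{sum-orth-id} gives
\[
\tilde E(\alpha)=q^{nP_d}\,\tilde N(\alpha),
\]
where $\tilde N(\alpha)$ counts tuples $(\mathbf z_1,\ldots,\mathbf z_{d-1})$ with $|\mathbf z_j|<q^{P_j}$ and
\[
\|\alpha\Lambda(\mathbf z_1,\ldots,\mathbf z_{d-1},\mathbf e_i)\|<q^{-P_d}\quad\text{for all } i.
\]
In particular $\tilde E(0)=q^{nH}$, and it suffices to bound $\tilde N(\alpha)$.

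Next we shrink the boxes one variable at a time, using Lemma~\ref{lem:shrinking}. Fix all variables except $\mathbf z_j$. The conditions are then linear forms in $\mathbf z_j$ given by the matrix $(\alpha\Lambda(\ldots,\mathbf e_k,\ldots,\mathbf e_i))_{k,i}$, which is symmetric because $\Gamma_f$ is symmetric. To fit Lemma~\ref{lem:shrinking}, we choose $A$ and $Z_2$ with $A+Z_2=P_j$ and $-A+Z_2$ equal to the current precision exponent. We then replace $Z_2$ by $Z_1=Z_2-(P_j-J)$, so that the box for $\mathbf z_j$ becomes $|\mathbf z_j|<q^{J}$ and the precision improves by a factor $q^{-(P_j-J)}$. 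This costs a factor $q^{n(P_j-J)}$.

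Applying this successively for $j=1,\ldots,d-1$ (with the nonnegativity and integrality hypotheses checked using $J\le P_{\min}$) yields
\[
\tilde N(\alpha)\le q^{n\sum_{j<d}(P_j-J)}\,\#\Bigl\{|\mathbf z_j|<q^{J}:\ \|\alpha\Lambda(\mathbf z_1,\ldots,\mathbf z_{d-1},\mathbf e_i)\|<q^{-H+(d-1)J}\ \forall i\Bigr\}.
\]
The main obstacle is the bookkeeping at this step. The precision exponent must be tracked so that after $d-1$ shrinkings it becomes exactly $-P_d-\sum_{j<d}(P_j-J)=-H+(d-1)J$. Each individual step must also satisfy the constraint $A-Z_2\ge0$ of Lemma~\ref{lem:shrinking}. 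If a step violates this, we would instead shrink only to the allowable level and absorb the discrepancy; the integrality of the exponents is automatic since all $P_j$ and $J$ are integers.

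Now we split into cases according to the value of $\Lambda$ on the shrunken box. For $|\mathbf z_j|<q^J$, each $\Lambda(\mathbf z_1,\ldots,\mathbf z_{d-1},\mathbf e_i)$ is a polynomial of degree at most $(d-1)(J-1)$.

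\textbf{Case 1.} Every tuple counted above has $\Lambda(\mathbf z_1,\ldots,\mathbf z_{d-1},\mathbf e_i)=0$ for all $i$, so the tuple lies in $Z(\F_q[u])$. Lemma~\ref{lem:poly-points} bounds the count by $\delta(Z)q^{J\dim Z}$. Since $Z$ is cut out by $n$ equations of degree $d-1$, B\'ezout's inequality gives $\delta(Z)\le(d-1)^n$. Combining the bounds,
\[
\tilde E(\alpha)\le (d-1)^n q^{nP_d+n\sum_{j<d}P_j-n(d-1)J+J\dim Z}=(d-1)^n\tilde E(0)q^{-\sigma J},
\]
which is alternative (1).

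\textbf{Case 2.} Some counted tuple has $g:=\Lambda(\mathbf z_1,\ldots,\mathbf z_{d-1},\mathbf e_i)\ne0$ for some $i$. Then $0<|g|\le q^{(d-1)(J-1)}$ and $\|g\alpha\|<q^{-H+(d-1)J}$. We let $a$ be the polynomial part of $g\alpha$, normalise $g$ to be monic by a scalar in $\F_q^\times$, and divide by $\gcd(a,g)$. This can only decrease both $|g|$ and $|g\alpha-a|$, so we obtain alternative (2).
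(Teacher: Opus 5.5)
Your outline follows the paper's argument: orthogonality in one variable, successive shrinking with precision exponents $B_t=P_d+\sum_{i\le t}(P_i-J)$, then the dichotomy between Lemma~\ref{lem:poly-points} and a nonzero value $g=\Lambda(\mathbf z_1,\ldots,\mathbf z_{d-1},\mathbf e_i)$. However, the shrinking step has a gap. At step $t$ you apply Lemma~\ref{lem:shrinking} with $A+Z_2=P_t$ and $A-Z_2=B_{t-1}$, so $Z_2=(P_t-B_{t-1})/2$.

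\begin{itemize}
\item The hypothesis that can fail is $Z_2\le 0$, i.e.\ $P_t\le B_{t-1}$.
\item The one you flagged, $A-Z_2\ge 0$, is automatic, because $A-Z_2=B_{t-1}\ge P_d>0$.
\item The condition $J\le P_{\min}$ only gives $Z_1\le Z_2$.
\end{itemize}

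Since you sum out $\mathbf z_d$ without any normalisation, the first step has $B_0=P_d$ and needs $P_1\le P_d$, which fails in general. For $d=2$ and $P_1>P_2$ there is only one step, so no reordering of the shrinking steps helps. When $Z_2>0$, Lemma~\ref{lem:shrinking} does not apply, and your fallback of ``shrinking only to the allowable level'' does not recover the stated exponent. For instance, cover $\mathcal B(P_1)$ by $q^{n(P_1-P_d)}$ translates of $\mathcal B(P_d)$ and then shrink from $Z=0$. This costs the same factor $q^{n(P_1-J)}$, but it only reaches precision $q^{-(2P_d-J)}$ rather than $q^{-(P_d+P_1-J)}$. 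Alternative (2) would then come out with the weaker bound $q^{-H+(d-1)J+(P_1-P_d)}$.

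The missing step is the one the paper's proof opens with. Because $\Lambda=c\Gamma_f$ is symmetric, $\tilde E(\alpha)$ is unchanged by permuting $P_1,\ldots,P_d$. So you may assume $P_d=\max_i P_i$ and sum out that variable by orthogonality. Then at every step:
\begin{itemize}
\item $B_{t-1}\ge B_0=P_d\ge P_t$, which gives $Z_2\le 0$;
\item $J\le P_t$, which gives $Z_1=J-A\le Z_2$;
\item $A\pm Z_i\in\{P_t,B_{t-1},J,B_t\}$, so these are integers.
\end{itemize}
With that normalisation the rest of your argument goes through exactly as you wrote it: the final precision $q^{-H+(d-1)J}$, the bound $\delta(Z)\le(d-1)^n$ in Case~1, and the gcd and monic normalisation in Case~2.
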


\begin{proof}
Reorder the variables so that $P_d=\max_i P_i$; this is harmless because $\Lambda$ is symmetric. We leave the last variable $\mathbf z_d$ undifferenced. Exactly as in the proof of Proposition \ref{prop:uniform}, using orthogonality,
\[
 \tilde E(\alpha)=q^{nP_d}\tilde N(\alpha),
\]
where $\tilde N(\alpha)$ counts
$(\mathbf z_1,\ldots,\mathbf z_{d-1})$ satisfying
\[
 |\mathbf z_i|<q^{P_i} \quad(1\leq i\leq d-1),
\]
and
\[
 \|\alpha\Lambda(\mathbf z_1,\ldots,\mathbf z_{d-1},\mathbf e_j)\| <q^{-P_d} \quad(1\leq j\leq n).
\]

Define
\[
B_0=P_d
\]
and, for $1\leq t\leq d-1$,
\[
B_t = P_d+\sum_{i=1}^{t}(P_i-J).
\]
Thus
\[
B_t-B_{t-1}=P_t-J.
\]

For $0\leq t\leq d-1$, let $\tilde N^{(t)}(\alpha)$ denote the number of tuples
\[
(\mathbf z_1,\ldots,\mathbf z_{d-1})
\]
satisfying
\[
|{\mathbf z_i}|<q^J,
\qquad 1\leq i\leq t,
\]
\[
|{\mathbf z_i}|<q^{P_i},
\qquad t<i\leq d-1,
\]
and
\[
\| \alpha\Lambda(\mathbf z_1,\ldots,\mathbf z_{d-1},\mathbf e_j) \|< q^{-B_t}, \qquad 1\leq j\leq n.
\]
We claim that for every $1\leq t\leq d-1$,
\[
\tilde N^{(t-1)}(\alpha)\leq q^{n(P_t-J)}\tilde N^{(t)}(\alpha).
\]
To see this, fix $t$, and freeze all variables except $\mathbf z_t$. Write
\[
\mathbf z_t=(z_{t,1},\ldots,z_{t,n}).
\]
For each $1\leq j\leq n$, define a linear form in the coordinates of $\mathbf z_t$ by
\[
L_j(\mathbf z_t) = \alpha \Lambda( \bz_1,\ldots,\bz_{t-1}, \bz_t, \bz_{t+1},\ldots,\bz_{d-1}, \be_j
).
\]
Since $\Lambda$ is linear in the $t$-th slot, we may write
\[
L_j(\mathbf z_t) = \sum_{k=1}^n \gamma_{jk}z_{t,k},
\]
where
\[
\gamma_{jk} = \alpha \Lambda( \bz_1,\ldots,\bz_{t-1}, \be_k, \bz_{t+1},\ldots,\bz_{d-1}, \mathbf e_j ).
\]
Note that $L_1,\ldots,L_n$ form a symmetric system of $n$ linear forms in $n$ variables.

For the fiber occurring in $\tilde N^{(t-1)}(\alpha)$, the variable $\mathbf z_t$ satisfies
\[
|{\bz_t}|<q^{P_t}
\]
and
\[
\|L_j(\bz_t)\|<q^{-B_{t-1}}, \qquad 1\leq j\leq n.
\]
Using the notation in Lemma \ref{lem:shrinking}, we set $A_t=\frac{P_t+B_{t-1}}{2}, Z_2=\frac{P_t-B_{t-1}}{2}$. Then $A_t+Z_2=P_t$ and $-A_t+Z_2=-B_{t-1}$. Therefore the original fiber is exactly $\mathcal N_{A_t}(Z_2)$. For the shrunken fiber, set $Z_1=J-A_t$, then $A_t+Z_1=J$ while $-A_t+Z_1= -B_t$. Hence $\mathcal N_{A_t}(Z_1)$ counts precisely those
$\mathbf z_t$ satisfying
\[
|{\mathbf z_t}|<q^J
\]
and
\[
\|{L_j(\mathbf z_t)}\|<q^{-B_t},
\qquad 1\leq j\leq n.
\]
Note that
\[
 B_{t-1}\geq P_d\geq P_t,\qquad J\leq P_t.
\]
Consequently $Z_2\leq0$, $Z_1\leq Z_2$, and
$A_t-Z_2=B_{t-1}\geq0$; moreover $A_t\pm Z_i$ are integers. Shrinking, Lemma \ref{lem:shrinking}, therefore gives
\[
\mathcal N_{A_t}(Z_2) \leq q^{n(Z_2-Z_1)} \mathcal N_{A_t}(Z_1) = q^{n(P_t-J)} \mathcal N_{A_t}(Z_1).
\]
This estimate holds for every fixed choice of the remaining variables. Summing it over all allowed values of those frozen variables yields
\[
\tilde N^{(t-1)}(\alpha)
\leq
q^{n(P_t-J)}\tilde N^{(t)}(\alpha).
\]
The above application of Lemma \ref{lem:shrinking} yields
\begin{equation}\label{eq:multi-shrink}
 \tilde N(\alpha)
 \leq q^{n\sum_{i=1}^{d-1}(P_i-J)}\tilde N_J(\alpha),
\end{equation}
where $\tilde N_J(\alpha)$ counts tuples with
\[
 |\mathbf z_i|<q^J\quad(1\leq i\leq d-1)
\]
and
\begin{equation}\label{eq:NJ-dual}
 \|\alpha\Lambda(\mathbf z_1,\ldots,\mathbf z_{d-1},\mathbf e_j)\| <q^{-H+(d-1)J} \quad(1\leq j\leq n).
\end{equation}
Suppose first that every tuple counted by $\tilde N_J(\alpha)$ belongs to $Z$. The variety $Z$ is cut out by $n$ equations of degree $d-1$, so $\delta(Z)\leq(d-1)^n$. By Lemma \ref{lem:poly-points},
\[
 \tilde N_J(\alpha)\leq(d-1)^n q^{J\dim Z}.
\]
Combining this with the bound for $\tilde E(\alpha)$ and \eqref{eq:multi-shrink}, and using
$\tilde E(0)=q^{nH}$, gives
\[
 |\tilde E(\alpha)| \leq(d-1)^n q^{np_d+n\sum_{i<d}(P_i-J)+J\dim Z} =(d-1)^n \tilde E(0)q^{-\sigma J}.
\]
This is the first alternative.

Otherwise, choose a tuple counted by $\tilde N_J(\alpha)$ that is not in $Z$. For some $j$ the polynomial
\[
 g_0=\Lambda(\bz_1,\ldots,\bz_{d-1},\be_j)
\]
is nonzero. Since every $\bz_i$ has degree strictly less than $J$,
\[
 0<|g_0|\leq q^{(d-1)(J-1)}.
\]
By \eqref{eq:NJ-dual}, there is $a_0\in\F_q[u]$ such that
\[
 |g_0\alpha-a_0|<q^{-H+(d-1)J}.
\]
Divide $g_0$ and $a_0$ by their greatest common divisor and make the new
denominator monic. This proves the second alternative.
\end{proof}

For $J\geq 1$, define
\begin{equation}\label{eq:major-arcs}
 \mathfrak M(J)= \bigcup_{\substack{g\in\F_q[u]\ \mathrm{monic}\\0<|g|\leq q^{(d-1)(J-1)}}}
 \ \bigcup_{\substack{a\in\F_q[u]\\|a|<|g|, (a,g)=1}}
 \left\{\alpha\in\mathbb T:
 \left|\alpha-\frac ag\right|
 <\frac{q^{-H+(d-1)J}}{|g|}
 \right\}.
\end{equation}
By Lemma \ref{lem:dirichlet},
\[
\mathfrak M(J)=\mathbb T \quad\text{whenever}\quad 2(d-1)J\geq H+d-1.
\]

\begin{proposition}[Mean value estimate]\label{prop:mean-value}
Let $\rho>0$. Suppose
\[
\sigma\rho>2(d-1)
\]
and
\[
 P_{\min}\geq \frac{H-d+1}{2(d-1)}.
\]
Put $\delta=\sigma\rho-2(d-1)>0$. Then
\begin{equation}\label{eq:mean-value}
 \int_\mathbb T |\tilde E(\alpha)|^\rho\,d\alpha \leq \tilde E(0)^\rho q^{-H+d-1} \left(1+(d-1)^{n\rho}\frac{q^{-\delta}}{1-q^{-\delta}}\right).
\end{equation}
\end{proposition}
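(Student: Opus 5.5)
The plan is to decompose $\mathbb T$ along the nested family of major arcs $\mathfrak M(1)\subseteq\mathfrak M(2)\subseteq\cdots$ and apply Lemma~\ref{lem:maj-min} on each difference set. Write
\[
\mathbb T=\mathfrak M(1)\ \cup\ \bigcup_{J=1}^{J^*}\bigl(\mathfrak M(J+1)\setminus\mathfrak M(J)\bigr),
\]
where $J^*$ is the least integer with $2(d-1)(J^*+1)\geq H+d-1$. By the remark after \eqref{eq:major-arcs}, $\mathfrak M(J^*+1)=\mathbb T$. On $\mathfrak M(1)$ only $g=1$, $a=0$ occur, so $\mathfrak M(1)=\{|\alpha|<q^{-H+d-1}\}$, which has measure $q^{-H+d-1}$. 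Using the trivial bound $|\tilde E(\alpha)|\leq\tilde E(0)$ there gives the main term $\tilde E(0)^\rho q^{-H+d-1}$.

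Now take $\alpha\in\mathfrak M(J+1)\setminus\mathfrak M(J)$ with $1\leq J\leq J^*$. Since $\alpha\notin\mathfrak M(J)$, the second alternative of Lemma~\ref{lem:maj-min} fails at level $J$. (If $|g\alpha-a|<q^{-H+(d-1)J}$ with $|g|\leq q^{(d-1)(J-1)}$, then $|\alpha-a/g|<q^{-H+(d-1)J}/|g|$; after reducing $a$ modulo $g$ we may take $|a|<|g|$, which only shifts $\alpha$ by a polynomial and is harmless in $\mathbb T$.) To apply the lemma I must check $J\leq P_{\min}$. Minimality of $J^*$ gives $2(d-1)J^*<H+d-1$, so $J^*<\frac{H-d+1}{2(d-1)}+1$. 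Since $J^*$ is an integer, the hypothesis $P_{\min}\geq\frac{H-d+1}{2(d-1)}$ should give $J^*\leq P_{\min}$; I expect the rounding here to be the delicate point, and I will check it carefully, possibly by sharpening the choice of $J^*$. Granting this, the first alternative holds: $|\tilde E(\alpha)|\leq(d-1)^n\tilde E(0)q^{-\sigma J}$.

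Next I bound $\operatorname{meas}\mathfrak M(J+1)$ by the union bound, without needing disjointness. For monic $g$ of degree $k$ there are $\phi(g)\leq q^k$ admissible numerators $a$. Each such arc has measure at most $q^{-H+(d-1)(J+1)}/q^{k}$, so the total over all $a$ for a fixed $g$ is at most $q^{-H+(d-1)(J+1)}$. I then sum over the monic $g$ of degree $\leq D=(d-1)J$, of which there are $q^k$ of degree $k$. A cruder count gives a factor $q/(q-1)$, which would spoil the constant. To avoid it I will use $\sum_{\deg g=k}\phi(g)=q^{2k}-q^{2k-1}$ for $k\geq1$, so the contribution of degree $k$ is $(q^k-q^{k-1})q^{-H+(d-1)(J+1)}$. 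These contributions telescope, together with the $k=0$ term, to
\[
\operatorname{meas}\mathfrak M(J+1)\leq q^{D}q^{-H+(d-1)(J+1)}=q^{-H+d-1}q^{2(d-1)J}.
\]

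Combining the pointwise bound with this measure bound, the $J$-th piece contributes at most
\[
(d-1)^{n\rho}\tilde E(0)^\rho q^{-H+d-1}q^{-(\sigma\rho-2(d-1))J}.
\]
Summing over $J\geq1$ gives the geometric series $q^{-\delta}/(1-q^{-\delta})$, which yields \eqref{eq:mean-value}. The main obstacle is bookkeeping rather than ideas: getting the measure of $\mathfrak M(J+1)$ with constant exactly one (the totient telescoping), and verifying that the hypothesis on $P_{\min}$ makes every level $J\leq J^*$ admissible in Lemma~\ref{lem:maj-min}.
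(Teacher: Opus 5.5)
Your proposal is correct and follows the paper's proof essentially step for step. The steps match: the trivial bound on $\mathfrak M(1)$, the first alternative of Lemma~\ref{lem:maj-min} on each $\mathfrak M(J+1)\setminus\mathfrak M(J)$, the totient identity $\sum_{\deg g\leq (d-1)J}\phi(g)/|g|=q^{(d-1)J}$ for the measure, and a geometric series in $J$. The rounding point you flag is immediate: $P_{\min}$ itself satisfies $2(d-1)(P_{\min}+1)\geq H+d-1$, so $J^*\leq P_{\min}$ by minimality. The paper phrases the same fact as $\mathfrak M(J)=\mathbb T$, hence $I_J=0$, for $J>P_{\min}$.
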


\begin{proof}
 We bound contribution from $\mathfrak M(1)$ trivially. Only possible denominator in \eqref{eq:major-arcs} for $J=1$ has degree zero, using the normalization
\[
 d\alpha(\mathfrak M(1))\leq q^{-H+d-1},
\]
and therefore using triangle inequality
\begin{equation}\label{eq:M1}
 \int_{\mathfrak M(1)}|\tilde E(\alpha)|^\rho\,d\alpha \leq \tilde E(0)^\rho q^{-H+d-1}.
\end{equation}
For $J\geq 1$, put
\[
 I_J=\int_{\mathfrak M(J+1)\setminus\mathfrak M(J)}|\tilde E(\alpha)|^\rho\,d\alpha.
\]
If $J> P_{\min}$, then
\[
 2(d-1)J\geq 2(d-1)(P_{\min}+1)\geq H+d-1.
\]
Thus $\mathfrak M(J)=\mathbb T$ by Lemma \ref{lem:dirichlet}, and hence $I_J=0$. Now let $1\leq J\leq P_{\min}$. On $\mathfrak M(J+1)\setminus\mathfrak M(J)$, the second alternative of Lemma \ref{lem:maj-min} with parameter $J$ is impossible, so
\[
 |\tilde E(\alpha)|^\rho \leq(d-1)^{n\rho}\tilde E(0)^\rho q^{-\sigma\rho J}.
\]
For a fixed denominator $g$, summing the measures of the arcs over all reduced numerators $a$ gives the bound
\[
d\alpha(\mathfrak M(J+1)) \leq \sum_{\substack{\deg g\leq (d-1)J,\\ \mathrm{monic}}} \phi(g)\cdot \frac{q^{-H+(d-1)(J+1)}}{|g|},
\]
where $\phi$ is the usual Euler totient function. Using the fact (see, e.g. \cite{Rosen2002})
\[
\sum_{\substack{\deg g\leq (d-1)J,\\ \mathrm{monic}}} \frac{\phi(g)}{|g|}= q^{(d-1)J},
\]
we obtain
\[
d\alpha(\mathfrak M(J+1)) \leq q^{-H+(d-1)(J+1)+(d-1)J}.
\]
It follows that
\begin{align*}
 I_J &\leq(d-1)^{n\rho}\tilde E(0)^\rho q^{-H+(d-1)(J+1)+(d-1)J-\sigma\rho J}\\
 &= (d-1)^{n\rho}\tilde E(0)^\rho q^{-H+d-1-\delta J}.
\end{align*}
Summing this estimate over $J\geq1$ and adding \eqref{eq:M1} proves \eqref{eq:mean-value}.
\end{proof}

\section{Finite field counting}\label{sec:finite-field-count}

In this section we count all coefficient tuples satisfying $f(g_1,\ldots,g_n)=0$, without imposing the base-point-free condition. Thus the counting problem concerns the affine cone over $Q_e(X)$; the morphism locus will be recovered later as a nonempty open subset.

\subsection{The coefficient system} For $m\geq0$, put
\[
 \Delta_{r-1}(m)= \{\ba=(a_1,\ldots,a_{r-1})\in\mathbb Z_{\geq0}^{r-1}: |\ba|\leq m\}.
\]
Thus
\[
 \#\Delta_{r-1}(m)=\binom{m+r-1}{r-1}.
\]

We work on the standard affine chart of $\mathbb P^r$ with coordinates $u,z_1,\ldots,z_{r-1}$. Every polynomial of total degree at most $e$ has a unique expansion
\begin{equation}\label{eq:g-expansion}
 g_i(u,\bz)= \sum_{\ba\in\Delta_{r-1}(e)} t_{i,\ba}(u)\mathbf z^{\ba}, \qquad \deg t_{i,\ba}\leq e-|\ba|.
\end{equation}
Set
\[
 \mathbf t_{\ba}=(t_{1,\mathbf a},\ldots,t_{n,\ba}) \in\F_q[u]^n, \qquad h(\ba)=e-|\ba|+1.
\]
Thus $|\mathbf t_{\mathbf a}|<q^{h(\ba)}$.

Expanding by polarisation gives
\begin{equation}\label{eq:f-expansion}
 f(g_1,\ldots,g_n)= \sum_{\boldsymbol\beta\in\Delta_{r-1}(de)} F_{\boldsymbol\beta}(\mathbf t)\mathbf z^{\boldsymbol\beta},
\end{equation}
where
\begin{equation}\label{eq:F-beta}
 F_{\boldsymbol\beta}(\mathbf t)= \sum_{\substack{\ba_1,\ldots,\ba_d\in\Delta_{r-1}(e)\\ \ba_1+\cdots+\ba_d=\boldsymbol\beta}}
 \Gamma_f(\mathbf t_{\ba_1},\ldots,\mathbf t_{\ba_d}).
\end{equation}
The polynomial $F_{\boldsymbol\beta}$ has degree at most $de-|\boldsymbol\beta|$ in $u$. Define
\begin{equation}\label{eq:h-beta}
 h_d(\boldsymbol\beta)=de-|\boldsymbol\beta|+1.
\end{equation}
Let
\begin{equation}\label{eq:U-def}
 \mathcal U= \prod_{\mathbf a\in\Delta_{r-1}(e)}\mathcal B(h(\mathbf a)).
\end{equation}
This section is devoted to estimating the following quantity over finite fields.
\begin{equation}\label{eq:N-def}
 N_{r, q}(e)=\#\left\{\mathbf t\in\mathcal U: F_{\boldsymbol\beta}(\mathbf t)=0\ \text{for all }\boldsymbol\beta\in\Delta_{r-1}(de)\right\}.
\end{equation}

We record the following elementary lemma for convenience.

\begin{Lemma}\label{lem:lattice-identities}
For every $m\geq0$,
\[
 \sum_{\mathbf a\in\Delta_{r-1}(m)}(m-|\mathbf a|+1) =\binom{m+r}{r}.
\]
Consequently,
\[
 \#\mathcal U=q^{n\binom{e+r}{r}}, \qquad \sum_{\boldsymbol\beta\in\Delta_{r-1}(de)} h_d(\boldsymbol\beta)=\binom{de+r}{r}.
\]
\end{Lemma}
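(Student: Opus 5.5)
Write $\delta=m-|\mathbf a|$ for $\mathbf a\in\Delta_{r-1}(m)$. The plan is to prove the identity
\[
\sum_{\mathbf a\in\Delta_{r-1}(m)}(m-|\mathbf a|+1)=\binom{m+r}{r}
\]
by a bijection, and then read off the two consequences by specialising $m$.

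\emph{The identity.} The summand $m-|\mathbf a|+1$ equals the number of integers $a_r$ with $0\leq a_r\leq m-|\mathbf a|$. Hence the left side counts pairs $(\mathbf a,a_r)$ with $\mathbf a\in\mathbb Z_{\geq0}^{r-1}$, $a_r\geq 0$ and $|\mathbf a|+a_r\leq m$. This is exactly the set $\Delta_r(m)$ of lattice points in the $r$-dimensional simplex. By the stated formula $\#\Delta_{r}(m)=\binom{m+r}{r}$, which is stars and bars with a slack variable, the identity follows. Alternatively, group the $\mathbf a$ by $k=|\mathbf a|$: there are $\binom{k+r-2}{r-2}$ of them, and a hockey-stick summation gives the same result. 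For $r=1$ the index set is a single point and the identity is trivial.

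\emph{First consequence.} By \eqref{eq:U-def} we have $\#\mathcal U=\prod_{\mathbf a}\#\mathcal B(h(\mathbf a))=q^{n\sum_{\mathbf a} h(\mathbf a)}$. Since $h(\mathbf a)=e-|\mathbf a|+1$, the identity with $m=e$ gives $\#\mathcal U=q^{n\binom{e+r}{r}}$.

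\emph{Second consequence.} Since $h_d(\boldsymbol\beta)=de-|\boldsymbol\beta|+1$, the identity with $m=de$ gives $\sum_{\boldsymbol\beta}h_d(\boldsymbol\beta)=\binom{de+r}{r}$.

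The argument has no real obstacle; the only point needing care is the bijection that adds the extra coordinate $a_r$.
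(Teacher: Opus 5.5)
Your proposal is correct and follows the paper's proof: you interpret $m-|\mathbf a|+1$ as the number of choices of an extra coordinate, which identifies the sum with $\#\Delta_r(m)=\binom{m+r}{r}$. You also make explicit the specialisations $m=e$ (via $\#\mathcal B(h(\mathbf a))=q^{nh(\mathbf a)}$) and $m=de$, which the paper leaves to the reader.
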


\begin{proof}
The left hand side counts pairs $(\mathbf a,j)$ with $\mathbf a\in\mathbb Z_{\geq0}^{r-1}$ and $0\leq j\leq m-|\mathbf a|$, these are exactly the points of $\Delta_r(m)$, whose cardinality is $\binom{m+r}{r}$.
\end{proof}

\subsection{Coefficient isolation} In this section we isolate coefficients in the circle method phase. Let us begin by decomposing $\boldsymbol\beta$ into a balanced form so we can apply our multiblock Weyl differencing and mean value estimate.

\begin{Lemma}\label{lem:balanced}
For every $\boldsymbol\beta\in\Delta_{r-1}(de)$, there are $\ba_1,\ldots,\ba_d\in\Delta_{r-1}(e)$ such that
\[
 \ba_1+\cdots+\ba_d=\boldsymbol\beta
\]
and the integers $|\ba_i|$ differ by at most one.
\end{Lemma}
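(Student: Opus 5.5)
We prove Lemma \ref{lem:balanced} by a greedy construction that distributes the coordinates of $\boldsymbol\beta$ one unit at a time among the $d$ vectors, always adding to a vector of currently minimal total degree. Write $B=|\boldsymbol\beta|\leq de$. Start with $\ba_1=\cdots=\ba_d=\mathbf 0$. Then run through the $B$ unit increments that make up $\boldsymbol\beta$ in any order; concretely, the multiset containing $\beta_k$ copies of the standard basis vector $\mathbf e_k$ for $1\leq k\leq r-1$. At each step, add the next unit vector to some $\ba_i$ whose current total degree $|\ba_i|$ is minimal among $|\ba_1|,\ldots,|\ba_d|$.

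First we check the balance invariant: the total degrees $|\ba_1|,\ldots,|\ba_d|$ always differ by at most one. This holds at the start, when all degrees are $0$. Suppose it holds before a step, so the degrees take values in $\{m,m+1\}$, where $m$ is the current minimum. We add $1$ to an entry equal to $m$. The new values then lie in $\{m,m+1\}$, or in $\{m+1\}$ alone if that was the last entry equal to $m$. Either way the invariant persists. At the end the entries are nonnegative integer vectors and $\ba_1+\cdots+\ba_d=\boldsymbol\beta$ by construction.

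Second, we check that each $\ba_i$ lies in $\Delta_{r-1}(e)$, that is, $|\ba_i|\leq e$. By balance, each $|\ba_i|\in\{\lfloor B/d\rfloor,\lceil B/d\rceil\}$. Since $B\leq de$, we get $\lceil B/d\rceil\leq e$. Nonnegativity of the coordinates is automatic.

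Equivalently, one can write $B=dk+\ell$ with $0\leq\ell<d$, so that $\ell$ of the vectors have degree $k+1$ and the rest have degree $k$. The only step needing any care is the degree bound $|\ba_i|\leq e$, and it follows at once from $B\leq de$. There is no real obstacle, since the coordinates can be split arbitrarily and only total degrees are constrained.
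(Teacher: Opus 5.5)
Your proof is correct and is essentially the paper's argument. The paper also views $\boldsymbol\beta$ as a multiset of unit vectors $\mathbf e_j$ and splits it into $d$ piles of sizes $w$ and $w+1$, where $|\boldsymbol\beta|=dw+s$. Your greedy rule is one explicit way of producing such a partition, and your check $\lceil |\boldsymbol\beta|/d\rceil\leq e$ matches the paper's observation that $w\leq e$, with $s=0$ when $w=e$.
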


\begin{proof}
Let us write
\[
 |\boldsymbol\beta|=dw+s, \qquad 0\leq s<d.
\]
Regarding $\boldsymbol\beta$ as a multiset containing $\beta_j$ copies of the $\be_j$, standard basis vector of $\mathbb Z^{r-1}$, we partition this multiset into $d$ piles, $s$ of cardinality $w+1$ and $d-s$ of cardinality $w$. Let $\ba_i$ be the sum of the vectors in the $i$th pile. Then the required sum identity holds and $|\mathbf a_i|\in\{w,w+1\}$. Since $|\boldsymbol\beta|\leq de$, one has $w\leq e$, and if $w=e$ then $s=0$. Hence every $\ba_i$ belongs to $\Delta_{r-1}(e)$.
\end{proof}

We fix such a decomposition for every $\boldsymbol\beta$. Let $\boldsymbol\gamma_1,\ldots,\boldsymbol\gamma_s$ be distinct vectors among $\ba_1,\ldots,\ba_d$, with, say, multiplicities $m_1,\ldots,m_s$. Then
\begin{equation}\label{eq:multiplicity-identities}
 m_1+\cdots+m_s=d,\qquad m_1\boldsymbol\gamma_1+\cdots+m_s\boldsymbol\gamma_s =\boldsymbol\beta.
\end{equation}
Putting
\begin{equation}\label{eq:P-beta}
 P_j=h(\boldsymbol\gamma_j)=e-|\boldsymbol\gamma_j|+1, \qquad H_{\boldsymbol\beta}=\sum_{j=1}^s m_jP_j,
\end{equation}
we deduce that
\begin{equation}\label{eq:H-beta}
 H_{\boldsymbol\beta}=d(e+1)-|\boldsymbol\beta|,\qquad H_{\boldsymbol\beta}-d+1=h_d(\boldsymbol\beta).
\end{equation}
We immediately get the following.

\begin{Lemma}\label{lem:height-threshold}
For every $\boldsymbol\beta$, the balanced decomposition satisfies
\[
 \min_jP_j\geq \frac{H_{\boldsymbol\beta}-d+1}{2(d-1)}.
\]
\end{Lemma}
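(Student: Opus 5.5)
Write $|\boldsymbol\beta|=dw+s'$ with $0\leq s'<d$, as in the proof of Lemma~\ref{lem:balanced}. By construction every $\boldsymbol\gamma_j$ in the balanced decomposition has $|\boldsymbol\gamma_j|\in\{w,w+1\}$, and the value $w+1$ occurs only when $s'>0$. The plan is to bound $\max_j|\boldsymbol\gamma_j|$ from above, which bounds $\min_j P_j$ from below, and then compare with $H_{\boldsymbol\beta}$ using \eqref{eq:H-beta}.

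By \eqref{eq:P-beta},
\[
\min_j P_j = e+1-\max_j|\boldsymbol\gamma_j| \geq e+1-w-\mathbf 1_{s'>0}.
\]
By \eqref{eq:H-beta},
\[
H_{\boldsymbol\beta}-d+1 = d(e+1)-dw-s'-d+1 = de-dw-s'+1.
\]
Multiplying by $2(d-1)$, the claim reduces to the elementary inequality
\[
2(d-1)(e+1-w-\epsilon)\geq de-dw-s'+1,\qquad \epsilon=\mathbf 1_{s'>0}.
\]
Put $k=e-w\geq 0$; recall from Lemma~\ref{lem:balanced} that $w\leq e$, with $s'=0$ if $w=e$. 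The inequality becomes
\[
2(d-1)(k+1-\epsilon)\geq dk-s'+1.
\]

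We check it in two cases.

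\emph{Case $s'=0$.} The inequality reads $2(d-1)(k+1)\geq dk+1$, that is, $(d-2)k+2d-3\geq 0$. This holds since $d\geq 2$ and $k\geq 0$.

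\emph{Case $s'\geq 1$.} Here $w<e$, so $k\geq 1$. The inequality reads $2(d-1)k\geq dk-s'+1$, that is, $(d-2)k+s'-1\geq 0$. This holds since $d\geq2$ and $s'\geq1$.

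No step is a real obstacle; the only care needed is the boundary case $w=e$, which forces $s'=0$ and is exactly where Lemma~\ref{lem:balanced} is used. The inequality has plenty of slack except when $d=2$.
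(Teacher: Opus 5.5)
Your proof is correct and rests on the same fact as the paper's: the balanced decomposition forces the $d$ side lengths, counted with multiplicity, to lie in $\{P_{\min},P_{\min}+1\}$, after which the claim is elementary arithmetic using $d\geq 2$. The paper skips your explicit parametrisation by $w,s'$ and the case split, bounding directly $H_{\boldsymbol\beta}\leq P_{\min}+(d-1)(P_{\min}+1)=dP_{\min}+d-1\leq 2(d-1)P_{\min}+d-1$.
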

\begin{proof}
    The $d$ side lengths $P_j$, counted with multiplicity $m_j$, differ by at most one,
\[
H_\beta\le P_{\min}+(d-1)(P_{\min}+1)
=dP_{\min}+d-1\le 2(d-1)P_{\min}+d-1,
\]
which gives the result.
\end{proof}

We now state the coefficient isolating lemma.

\begin{Lemma}\label{lem:isolation}
Let
\[
 \Phi_{\boldsymbol\alpha}(\mathbf t)= \sum_{\boldsymbol\eta\in\Delta_{r-1}(de)} \alpha_{\boldsymbol\eta}F_{\boldsymbol\eta}(\mathbf t),
 \qquad \boldsymbol\alpha\in\mathbb T^{\Delta_{r-1}(de)}.
\]
Fix $\boldsymbol\beta$, and regard this phase as a polynomial in the selected blocks $\mathbf t_{\boldsymbol\gamma_1},\ldots,
\mathbf t_{\boldsymbol\gamma_s}$, keeping every other block fixed. Then
\begin{equation}\label{eq:isolation}
 \Phi_{\boldsymbol\alpha}(\mathbf t)= \alpha_{\boldsymbol\beta}G_{\boldsymbol\beta} (\mathbf t_{\boldsymbol\gamma_1},\ldots, \mathbf t_{\boldsymbol\gamma_s})+R_{\boldsymbol \alpha, \boldsymbol\beta}(\mathbf t),
\end{equation}
where
\begin{equation}\label{eq:G-beta}
 G_{\boldsymbol\beta}(\bx_1,\ldots,\bx_s)= \frac{d!}{m_1!\cdots m_s!} \Gamma_f( \underbrace{\mathbf x_1,\ldots,\bx_1}_{m_1},\ldots,
 \underbrace{\bx_s,\ldots,\bx_s}_{m_s}),
\end{equation}
and every monomial of $R_{\boldsymbol{\alpha, \beta}}$ has degree strictly less than $m_j$ in at least one selected block $j$.
\end{Lemma}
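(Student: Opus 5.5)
Fix $\boldsymbol\beta$ and its chosen balanced decomposition with distinct blocks $\boldsymbol\gamma_1,\ldots,\boldsymbol\gamma_s$ and multiplicities $m_1,\ldots,m_s$. The plan is to expand every $F_{\boldsymbol\eta}$ by multilinearity of $\Gamma_f$, viewing each term $\Gamma_f(\mathbf t_{\ba_1},\ldots,\mathbf t_{\ba_d})$ as a monomial-type expression in the blocks. For such a term, let $k_j$ be the number of indices $i$ with $\ba_i=\boldsymbol\gamma_j$. Its degree in the scalar coordinates of the selected block $\mathbf t_{\boldsymbol\gamma_j}$ is then exactly $k_j$, and the remaining slots are filled with frozen blocks. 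Every scalar monomial coming from this term has the same block multidegree $(k_1,\ldots,k_s)$.

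There are two cases.

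\emph{Case 1: $k_j\geq m_j$ for every $j$.} Since $\sum_j m_j=d$ and $\sum_j k_j\leq d$, we must have $k_j=m_j$ for all $j$ and no frozen slot at all. So $\{\ba_1,\ldots,\ba_d\}$ is, as a multiset, exactly $\{\boldsymbol\gamma_j^{(m_j)}\}$. Its sum is $\sum_j m_j\boldsymbol\gamma_j=\boldsymbol\beta$ by \eqref{eq:multiplicity-identities}, so the term occurs only in $F_{\boldsymbol\beta}$, that is, with $\boldsymbol\eta=\boldsymbol\beta$.

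\emph{Case 2: $k_j<m_j$ for some $j$.} This is precisely the deficiency condition in Definition \ref{def:deficiency}, and all such terms, whatever $\boldsymbol\eta$ they come from, are put into $R_{\boldsymbol\alpha,\boldsymbol\beta}$.

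Next I would count the non-deficient terms in $F_{\boldsymbol\beta}$. The sum \eqref{eq:F-beta} runs over ordered $d$-tuples $(\ba_1,\ldots,\ba_d)$. The ordered tuples realising the multiset $\{\boldsymbol\gamma_j^{(m_j)}\}$ number $d!/(m_1!\cdots m_s!)$, since the $\boldsymbol\gamma_j$ are distinct. By symmetry of $\Gamma_f$, each such tuple contributes $\Gamma_f(\bx_1,\ldots,\bx_1,\ldots,\bx_s,\ldots,\bx_s)$ with $\bx_j=\mathbf t_{\boldsymbol\gamma_j}$. Their total is therefore $G_{\boldsymbol\beta}$ as in \eqref{eq:G-beta}, with coefficient $\alpha_{\boldsymbol\beta}$. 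Everything else collected in Case 2 forms $R_{\boldsymbol\alpha,\boldsymbol\beta}$, and this gives \eqref{eq:isolation}.

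One point needs care: the decomposition must be stated at the level of scalar monomials. A given scalar monomial could arise both from a Case 1 term and from a Case 2 term only if the two had the same block multidegree. That is impossible, because block multidegrees differ between the cases. So the split is consistent, and no cancellation can move a monomial between $G_{\boldsymbol\beta}$ and $R_{\boldsymbol\alpha,\boldsymbol\beta}$.

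The main (mild) obstacle is the bookkeeping that the frozen blocks are treated as coefficients. Terms consisting entirely of frozen blocks have $k_j=0<m_j$, so they are deficient and harmless. There is nothing analytic to do here.
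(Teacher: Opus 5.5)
Your proof is correct and follows essentially the same route as the paper. You use the same two steps: a degree count showing that a term of degree at least $m_j$ in every selected block must have degree exactly $m_j$ in each and no frozen slot, hence index $\boldsymbol\beta$; then the multinomial count $d!/(m_1!\cdots m_s!)$ of the corresponding ordered tuples in \eqref{eq:F-beta}. Your version works first with the polarised terms $\Gamma_f(\mathbf t_{\ba_1},\ldots,\mathbf t_{\ba_d})$ and then notes that block multidegree separates the two cases at the level of scalar monomials, which is a slightly more careful rendering of the paper's monomial-level argument.
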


\begin{proof}
Every monomial in every $F_{\boldsymbol\eta}$ has total degree $d$ in all coefficient blocks. A monomial having degree at least $m_j$ in every selected block has selected degree at least $m_1+\cdots+m_s=d$. Hence it has degree exactly $m_j$ in each selected
block and contains no unselected block. Its index is necessarily
\[
 m_1\boldsymbol\gamma_1+\cdots+m_s\boldsymbol\gamma_s=\boldsymbol\beta.
\]
Thus it comes only from $F_{\boldsymbol\beta}$. In the ordered sum \eqref{eq:F-beta}, the corresponding multiset occurs $d!/(m_1!\cdots m_s!)$ times. All other monomials are deficient in at least one selected block.
\end{proof}

\subsection{Application of the circle method} Set
\[
 C=\#\Delta_{r-1}(de)=\binom{de+r-1}{r-1}.
\]
For
$\boldsymbol\alpha=(\alpha_{\boldsymbol\beta})\in\mathbb T^C$, define
\begin{equation}\label{eq:S-def}
 S(\boldsymbol\alpha)= \sum_{\mathbf t\in\mathcal U} \psi\left( \sum_{\boldsymbol\beta\in\Delta_{r-1}(de)} \alpha_{\boldsymbol\beta}F_{\boldsymbol\beta}(\mathbf t)\right).
\end{equation}
By orthogonality \eqref{int-orth-id},
\begin{equation}\label{eq:N-integral}
 N_{r, q}(e)=\int_{\mathbb T^C}S(\boldsymbol\alpha)\,d\boldsymbol\alpha.
\end{equation}

For fixed $\boldsymbol\beta$, we apply Proposition \ref{prop:uniform} to the selected blocks in Lemma \ref{lem:isolation}, while fixing every nonselected block. Let $E_{\boldsymbol\beta}(\theta)$ denote the resulting fully multilinear sum. For this sum, the list of $d$ side lengths consists of $P_j$ repeated $m_j$ times, so its height sum is $H_{\boldsymbol\beta}$. Summing trivially over the fixed blocks gives
\begin{equation}\label{eq:S-one-beta}
 |S(\boldsymbol\alpha)|^{2^{d-1}} \leq \#\mathcal U^{2^{d-1}} E_{\boldsymbol\beta}(0)^{-1} |E_{\boldsymbol\beta}(\alpha_{\boldsymbol\beta})|.
\end{equation}
Varying $\boldsymbol\beta$ we deduce

\begin{equation}\label{eq:S-factor}
 |S(\boldsymbol\alpha)| \leq \#\mathcal U \prod_{\boldsymbol\beta} \left(\frac{|E_{\boldsymbol\beta}(\alpha_{\boldsymbol\beta})|} {E_{\boldsymbol\beta}(0)} \right)^\rho, \qquad \rho=\frac{1}{C2^{d-1}}.
\end{equation}

Suppose that $f$ is a nonsingular degree-$d$ form over $\F_q$ and
\[
 n>2^d(d-1)C.
\]
For every $\boldsymbol\beta$, Lemma \ref{lem:sigma} gives $\sigma_{\boldsymbol\beta}\geq n$. Since $\rho=1/(C2^{d-1})$, using the above inequality,
\[
 \sigma_{\boldsymbol\beta}\rho
 \geq n\rho>2(d-1).
\]

Next we apply the mean value estimate. The height condition of Proposition \ref{prop:mean-value} follows from Lemma \ref{lem:height-threshold}. Consequently
\begin{equation}\label{eq:Ebeta-mean}
 \int_\mathbb T |E_{\boldsymbol\beta}(\theta)|^\rho\,d\theta \leq E_{\boldsymbol\beta}(0)^\rho q^{-H_{\boldsymbol\beta}+d-1} \bigl(1+O(q^{-\delta})\bigr).
\end{equation}
The same positive $\delta$ may be used for all $\boldsymbol\beta$, since
there are only finitely many of them and all parameters are fixed.
By \eqref{eq:H-beta},
\[
 -H_{\boldsymbol\beta}+d-1=-h_d(\boldsymbol\beta).
\]

Integrating \eqref{eq:S-factor}, using Fubini, and applying \eqref{eq:Ebeta-mean} gives
\begin{align*}
 N_{r, q}(e)
 &\leq\int_{\mathbb T^C}|S(\boldsymbol\alpha)|\,d\boldsymbol\alpha\\
 &\leq \#\mathcal U
 \prod_{\boldsymbol\beta}
 E_{\boldsymbol\beta}(0)^{-\rho}
 \int_\mathbb T |E_{\boldsymbol\beta}(\theta)|^\rho\,d\theta\\
 &\leq \#\mathcal U
 \prod_{\boldsymbol\beta}q^{-h_d(\boldsymbol\beta)}
 \bigl(1+O(q^{-\delta})\bigr).
\end{align*}
Since $C$ is fixed, the product of the error factors is again $1+O(q^{-\delta})$. Using the identities in Lemma \ref{lem:lattice-identities} we have proved the following theorem.

\begin{theorem}\label{thm:finite-field}
Suppose that $f$ is a nonsingular degree $d$ form over $\F_q$ and
\[
 n>2^d(d-1)C.
\]
Then there is $\delta>0$, depending only on $n,d,e,r$, such that
\begin{equation}\label{eq:finite-estimate}
 N_{r, q}(e)
 \leq
 q^{n\binom{e+r}{r}-\binom{de+r}{r}}
 \bigl(1+O(q^{-\delta})\bigr),
\end{equation}
where the implied constant is independent of $q$.
\end{theorem}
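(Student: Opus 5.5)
The proof assembles the estimates of \S\ref{sec:weyl-differencing}--\S\ref{sec:mean-value} inside the circle method integral \eqref{eq:N-integral}, along the lines already sketched in the text preceding the statement.

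\emph{Step 1: one frequency at a time.} Fix $\boldsymbol\beta\in\Delta_{r-1}(de)$ together with its balanced decomposition from Lemma~\ref{lem:balanced}, with distinct blocks $\boldsymbol\gamma_1,\ldots,\boldsymbol\gamma_s$ of multiplicities $m_1,\ldots,m_s$. Lemma~\ref{lem:isolation} writes the phase as $\alpha_{\boldsymbol\beta}G_{\boldsymbol\beta}+R_{\boldsymbol\alpha,\boldsymbol\beta}$. Here $G_{\boldsymbol\beta}$ has the shape \eqref{eq:G} with $c=d!/\prod m_j!$, which is nonzero in $\F_q$ because $p>d$. Every monomial of $R_{\boldsymbol\alpha,\boldsymbol\beta}$ is $(m_1,\ldots,m_s)$-deficient.

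Take $\Omega$ to be the set of all choices of the unselected blocks together with the other frequencies $\alpha_{\boldsymbol\eta}$, $\boldsymbol\eta\neq\boldsymbol\beta$. Proposition~\ref{prop:uniform} then bounds the inner sum over the selected blocks, uniformly in $\Omega$. Summing trivially over the unselected blocks, and using that the selected blocks contribute $M$ to $\#\mathcal U$, gives \eqref{eq:S-one-beta}.

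\emph{Step 2: geometric mean.} Take the $1/(C2^{d-1})$ power of \eqref{eq:S-one-beta} and multiply over all $C$ indices $\boldsymbol\beta$. This yields \eqref{eq:S-factor}, in which the right-hand side is a product of functions of the separate variables $\alpha_{\boldsymbol\beta}$.

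\emph{Step 3: mean values.} The sum $E_{\boldsymbol\beta}$ is an instance of $\tilde E$, with the side lengths $P_j$ repeated $m_j$ times and $\Lambda=c_*\Gamma_f$. Its height sum is therefore $H_{\boldsymbol\beta}$. Lemma~\ref{lem:sigma} gives $\sigma_{\boldsymbol\beta}\ge n$, so the hypothesis $n>2^d(d-1)C$ yields
\[
\sigma_{\boldsymbol\beta}\rho\ \geq\ n\rho\ >\ 2(d-1).
\]
Lemma~\ref{lem:height-threshold} supplies the height condition $P_{\min}\ge (H_{\boldsymbol\beta}-d+1)/(2(d-1))$. Proposition~\ref{prop:mean-value} then applies and gives \eqref{eq:Ebeta-mean}. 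The constant there is $(d-1)^{n\rho}q^{-\delta}/(1-q^{-\delta})=O(q^{-\delta})$, where $\delta=\min_{\boldsymbol\beta}(\sigma_{\boldsymbol\beta}\rho-2(d-1))$. This is bounded below by $n\rho-2(d-1)>0$, so it depends only on $n,d,e,r$.

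\emph{Step 4: assembly.} Bound $N_{r,q}(e)\le\int|S|$, insert \eqref{eq:S-factor}, and factor the integral over $\mathbb T^C$ by Fubini into a product of one-dimensional integrals. Each factor is controlled by \eqref{eq:Ebeta-mean}, with exponent $-H_{\boldsymbol\beta}+d-1=-h_d(\boldsymbol\beta)$ by \eqref{eq:H-beta}. Lemma~\ref{lem:lattice-identities} then converts $\#\mathcal U\prod_{\boldsymbol\beta}q^{-h_d(\boldsymbol\beta)}$ into $q^{n\binom{e+r}{r}-\binom{de+r}{r}}$. The product of $C$ factors of the form $1+O(q^{-\delta})$ is again $1+O(q^{-\delta})$.

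\emph{Main obstacle.} The delicate point is the uniformity in Step 1. The bound for $\alpha_{\boldsymbol\beta}$ must not depend on the other frequencies or on the frozen blocks, since otherwise the product in Step 2 cannot be separated. This uniformity is exactly what the deficiency hypothesis in Proposition~\ref{prop:uniform} provides, so the essential check is that Lemma~\ref{lem:isolation} delivers precisely that hypothesis.

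A second point to check is that the dependence of $R$ on $\alpha$ causes no trouble. The estimate in Proposition~\ref{prop:uniform} is uniform in the coefficients of $R$, so this is fine. Finally, the error constants are independent of $q$, because $\delta(Z)\le(d-1)^n$ holds for every $q$.
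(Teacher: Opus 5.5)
Your proposal is correct and is essentially the paper's own argument. It uses isolation via Lemma~\ref{lem:isolation} and Proposition~\ref{prop:uniform}, applied uniformly over the frozen blocks and the other frequencies, to get \eqref{eq:S-one-beta}; then the geometric mean \eqref{eq:S-factor}; then Proposition~\ref{prop:mean-value} via Lemmas~\ref{lem:sigma} and~\ref{lem:height-threshold}; and finally Fubini together with Lemma~\ref{lem:lattice-identities}.

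One small tidy-up: $\sigma$ depends on $f$, so you should take $\delta=n\rho-2(d-1)$ itself rather than $\min_{\boldsymbol\beta}(\sigma_{\boldsymbol\beta}\rho-2(d-1))$. This is legitimate because $q^{-\delta}/(1-q^{-\delta})$ only increases as $\delta$ decreases, and it makes the dependence on $n,d,e,r$ alone literal rather than merely a lower bound.
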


\section{Proof of main results}\label{sec:geometric-deduction}

Let
\[
V_e := H^0\bigl(\mathbb P^r,\mathcal O_{\mathbb P^r}(e)\bigr), \qquad V_{de} := H^0\bigl(\mathbb P^r,\mathcal O_{\mathbb P^r}(de)\bigr).
\]
For convenience we set
\[
\widehat{\mu}_r(e)= {\mu}_r(e)+1.
\]
Suppose initially that $X $ is defined over a finite field $\mathbb F_q $. For every finite extension
\[
\mathbb F_{q^\ell}/\mathbb F_q,
\]
we put
\[
Y:= Q_e(X).
\]
Note that
\[
\#Y(\mathbb F_{q^\ell}) = \frac{N_{r,q^\ell}(e)-1}{q^\ell-1}.
\]
By Theorem \ref{thm:finite-field}, there exist constants
\[
\tilde C>0 \qquad\text{and}\qquad \delta>0,
\]
depending only on $n,d,e,r $, such that
\begin{equation}
N_{r,q^\ell}(e)
\leq
q^{\widehat{\mu}_r(e)\cdot \ell} + \tilde C\cdot q^{(\widehat{\mu}_r(e)-\delta)\cdot \ell}
\label{eq:N-upper-bound}
\end{equation}
for every $\ell \geq 1$. Therefore,
\begin{equation}
\limsup_{\ell\to\infty} q^{-\ell\mu_r(e)} \# Y(\mathbb F_{q^\ell}) \leq 1.
\label{eq:normalized-point-upper-bound}
\end{equation}

\subsection{Lower bound} The scheme $Y $ is a projective subscheme
\[
Y \subset \mathbb P\bigl(V_e^{\oplus n}\bigr) \simeq \mathbb P^{n\binom{e+r}{r}-1}
\]
defined by the coefficients of the degree $de $ form
\[
f(g_1,\ldots,g_n).
\]
Note that $Y$ is cut out by $\binom{de+r}{r}$ equations in $\mathbb P^{n\binom{e+r}{r}-1} $, every irreducible component of $Y$ has codimension at most $\binom{de+r}{r}$ using Krull's height theorem we get that each irreducible component of $Y$ has dimension at least $\mu_r(e)$.

\subsection{Lang--Weil estimate} Note that $Y$ is geometrically nonempty. Write
\[
Y_{\overline{\mathbb F}_q,\mathrm{red}} = \mathcal C_1\cup\cdots\cup \mathcal C_\lambda
\]
for the decomposition into geometrically irreducible components, and put
\[
\kappa := \max_{\lambda}\dim \mathcal C_\lambda.
\]
Let $c $ denote the number of components of dimension $\kappa $.

After replacing $\mathbb F_q $ by a finite extension, we may assume that every $\mathcal C_\lambda $ is defined over the base field and is geometrically irreducible.

For each component $\mathcal C_\lambda $ of dimension $\kappa $, the Lang--Weil estimate
gives
\[
\#\mathcal C_\lambda(\mathbb F_{q^\ell}) = q^{\kappa\cdot \ell} + O\bigl(q^{\ell\cdot(\kappa-\frac12)}\bigr)
\]
for $\ell \geq 1$.

If $\mathcal C_\lambda $ and $\mathcal C_{\lambda'} $ are distinct components of dimension $\kappa $, then
\[
\dim(\mathcal C_\lambda\cap \mathcal C_{\lambda'}) \leq \kappa-1.
\]
Indeed, an irreducible component of $\mathcal C_\lambda\cap \mathcal C_{\lambda'} $ of dimension $\kappa $ would be dense in both $\mathcal C_\lambda $ and $\mathcal C_{\lambda'} $, forcing $\mathcal C_{\lambda}=\mathcal C_{\lambda'} $.

The union of the components of dimension strictly less than $\kappa $, together with the pairwise intersections of the $\kappa $ dimensional components, has dimension at most $\kappa-1 $. Hence this exceptional locus has
\[
O(q^{\ell\cdot(\kappa-1)})
\]
 $\F_{q^\ell} $ points. It follows that
\begin{equation}
\#Y(\mathbb F_{q^\ell}) = c\cdot q^{\ell\cdot \kappa} + O\bigl(q^{\ell\cdot(\kappa-\frac12)}\bigr).
\label{eq:lang-weil-components}
\end{equation}
Therefore
\[
\kappa \leq\mu_r(e).
\]
On the other hand, every geometrically irreducible component of $Y $ has dimension at least $\mu_r(e) $. It follows that
\[
\kappa=\mu_r(e),
\]
and every geometrically irreducible component of $Y $ has dimension exactly $\mu_r(e) $.

Substituting $\kappa =\mu_r(e) $ into
\eqref{eq:lang-weil-components}, we obtain
\[
q^{-\ell\cdot \mu_r(e)} \#Y(\mathbb F_{q^\ell})= c + O(q^{-\frac\ell2}).
\]
Comparing \eqref{eq:normalized-point-upper-bound} and \eqref{eq:lang-weil-components} gives $c\leq 1$. Since $Y\neq \varnothing$, we have $c\geq 1$, and hence $c=1$. We conclude that
\[
Q_e(X)
\]
is geometrically irreducible and has dimension
\[
\mu_r(e).
\]

\subsection{Existence of $r$-planes} We verify that $\op{Mor}_e(\mathbb P^r, X)\neq \varnothing$.

\begin{Lemma}\label{lem:linear-plane}
Under the assumption
\[
 n> 2^d(d-1)\binom{de+r-1}{r-1},
\]
the hypersurface $X$ contains a linear $\mathbb P^r$.
\end{Lemma}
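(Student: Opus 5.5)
We need $X$ to contain a linear $\mathbb P^r$ over $K$ algebraically closed. The plan is a dimension count on the Fano scheme, in the spirit of the classical argument. We will not try to extract existence from the finite field count, which only gives upper bounds.

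The first step is the setup. Consider the incidence variety $I=\{(\Lambda, X'): \Lambda\subset X'\}\subset G(r+1,n)\times \mathbb P(H^0(\mathbb P^{n-1},\cO(d)))$. The fibre over each $\Lambda$ is a linear subspace of codimension $\binom{d+r}{r}$. So $I$ is irreducible of dimension $\dim G+\dim\mathbb P(\ldots)-\binom{d+r}{r}$.

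The expected dimension $(r+1)(n-r-1)-\binom{d+r}{r}$ is positive under our hypothesis. This holds because $n>2^d(d-1)\binom{de+r-1}{r-1}\geq 2^d\binom{d+r-1}{r-1}$, and $\binom{d+r}{r}=\binom{d+r-1}{r-1}\cdot\frac{d+r}{d}$ is much smaller than $(r+1)(n-r-1)$. However, positivity of the expected dimension only shows that a general hypersurface contains an $r$-plane. For our specific smooth $X$, we need a different argument.

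The cleaner route is the standard inductive cone construction over an algebraically closed field, which needs only lower bounds on dimensions. We build a flag $p_0\in\Lambda_1\subset\cdots\subset\Lambda_r\subset X$ by induction on $k$. Given a $k$-plane $\Lambda_k\subset X$, the $(k+1)$-planes containing $\Lambda_k$ correspond to points of $\mathbb P^{n-k-2}$ (the quotient). The condition that $\mathrm{span}(\Lambda_k,y)\subset X$ is given by vanishing of the coefficients of $f(\mathbf x+\lambda\mathbf y)$ as a polynomial in $\mathbf x\in\Lambda_k$ and $\lambda$. Counting monomials with positive $\lambda$-degree, these are $\sum_{j=1}^{d}\binom{d-j+k}{k}$ homogeneous equations in $\mathbf y$, of degrees $j\le d$. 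These equations cut out a nonempty projective set whenever
\[
n-k-2\ \geq\ \sum_{j=1}^{d}\binom{d-j+k}{k}=\binom{d+k}{k+1}.
\]
Nonemptiness follows from the projective dimension theorem: any number of homogeneous equations, at most the dimension of the ambient projective space, have a common zero over $K$. We then need a point $y\notin\Lambda_k$. Since we work in the quotient projective space, the point is automatically outside $\Lambda_k$, provided we set up the equations on representatives modulo $\Lambda_k$. This is legitimate because the span depends only on the class of $y$.

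For the numerics, we need $n\geq r+1+\binom{d+r-1}{r}$ for all $k\le r-1$; the worst case is $k=r-1$. The hypothesis gives $n>2^d\binom{de+r-1}{r-1}\geq 2^d\binom{d+r-1}{r-1}$. Since $\binom{d+r-1}{r}=\binom{d+r-1}{r-1}\cdot\frac{d}{r}$, and $2^d\ge d+1$, the bound $n-r\geq \binom{d+r-1}{r}+1$ follows by an elementary comparison. This comparison uses $\binom{d+r-1}{r-1}\geq r$, so that $r$ is absorbed.

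Smoothness of $X$ and the characteristic assumption are not needed here. The main obstacle is the bookkeeping in the middle step. One must check that the equations in $y$ are well defined on the quotient by $\Lambda_k$, rather than spuriously satisfied by $y\in\Lambda_k$. One must also verify the equation count $\binom{d+k}{k+1}$. Once $\Lambda_r$ is found, a linear isomorphism $\mathbb P^r\to\Lambda_r$ gives a point of $\op{Mor}_1(\mathbb P^r,X)$.
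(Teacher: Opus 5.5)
Your proposal is correct and follows the paper's argument. You use the same inductive extension $\Lambda_k\subset\Lambda_{k+1}$, the same count $\sum_{j=1}^d\binom{d-j+k}{k}=\binom{d+k}{k+1}$ of homogeneous equations on $\mathbb P^{n-k-2}$, and the projective dimension theorem; the only differences are cosmetic, in the final numerical comparison (you use $2^d\geq d+1$ and $\binom{d+r-1}{r-1}\geq r$, the paper uses $2^d(d-1)r/d\geq 4$). The incidence-variety discussion is unnecessary. The well-definedness issue you flag is handled most cleanly by restricting $y$ to a fixed complement of $W$ in $K^n$, a point the paper also leaves implicit.
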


\begin{proof}
We work over the algebraic closure and use an inductive approach. Start with a point of $X$ and suppose inductively that $L=\mathbb P(W)\simeq \mathbb P^k$ is contained in $X$, where $0\leq k<r$ and $\dim W=k+1$. We want a vector $y\notin W$ such that $\mathbb P(W+Ky)\subset X$.
Because $f|_W=0$, the required condition is that all coefficients of positive powers of $\lambda$ in $f(w+\lambda y)$ vanish identically as functions of $w\in W$. For $1\leq j\leq d$, the coefficient of $\lambda^j$ is a form
of degree $d-j$ in $w$. Thus the number of homogeneous equations imposed on the class $[y]\in\mathbb P(K^n/W)\simeq \mathbb P^{n-k-2}$ is
\begin{equation}\label{eq:extension-equations}
 \sum_{j=1}^d\binom{d-j+k}{k}=\binom{d+k}{k+1}.
\end{equation}
If
\begin{equation}\label{eq:extension-condition}
 n-k-2\geq\binom{d+k}{k+1},
\end{equation}
then the projective dimension theorem ensures that these equations have a
common zero in $\mathbb P^{n-k-2}$, and this zero gives the required extension.

It remains to check \eqref{eq:extension-condition}. Since $e\geq1$ and
$k\leq r-1$,
\[
 \binom{de+r-1}{r-1}\geq\binom{d+r-1}{r-1}.
\]
Moreover
\[
 \binom{d+r-1}{r-1}=\frac r d\binom{d+r-1}{r} \geq\frac r d\binom{d+k}{k+1}.
\]
For $r\geq2$ and $d\geq2$, $2^d(d-1)r/d\geq4$. Hence the bound on degree $d$ implies,
\[
 n>\binom{d+k}{k+1}+k+2.
\]
Thus \eqref{eq:extension-condition} holds. Induction constructs a linear $\mathbb P^r\subset X$.
\end{proof}

Composing the inclusion $\mathbb P^r\hookrightarrow X$ with
\[
 [x_0:\cdots:x_r]\longmapsto[x_0^e:\cdots:x_r^e]
\]
produces a point of $\op{Mor}_e(\mathbb P^r,X)$.

\begin{proof}[Proof of Theorem \ref{thm:Q_e}]
The above discussion completes the proof when the base field is algebraic closure of a finite field.

We treat the general case by spreading out, see \cite[Section 2]{Browning2017}, \cite[Section 3]{browning2025rationalsurfaceslowdegree} for details. Suppose the base field has characteristic zero, or it is an arbitrary algebraically closed field of characteristic $p>d$. Spreading out $f$, $X$, and the coefficient scheme over an integral finitely generated subring of the base field, the preceding argument then applies to every closed fiber after shrinking the base. If the geometric generic fiber were reducible or had dimension different from $\mu_r(e)$, then after a finite extension of the generic point its components and their dimensions would spread out over a nonempty open subset, contradicting the finite field fibers. Thus the geometric generic fiber is irreducible of the required dimension. Base change to the given algebraically closed field completes the proof of Theorem \ref{thm:Q_e}.
\end{proof}

\begin{proof}[Proof of Theorem \ref{thm:main}]
 The morphism scheme is an open locus
\[
\op{Mor}_e(\mathbb P^r,X)\subset Q_e(X).
\]
By Lemma \ref{lem:linear-plane}, $\op{Mor}_e(\mathbb P^r,X) \neq \varnothing$ in our range and Theorem \ref{thm:Q_e} shows that $Q_e(X)$ is irreducible of the expected dimension, hence $\op{Mor}_e(\mathbb P^r,X)$ is irreducible of dimension $\mu_r(e)$. 
\end{proof}

\begin{remark}\label{rem:singular-targets}
The smoothness hypothesis is used in Lemma \ref{lem:sigma} to bound the codimension of the multilinear singular locus. More generally, let
\[
\mathfrak B(f):=\operatorname{codim}_{\mathbb A^n} \{x\in\mathbb A^n:\nabla f(x)=0\}.
\]
The same diagonal argument gives $\sigma\geq \mathfrak B(f)$. Consequently, the analytic estimate underlying Theorem \ref{thm:finite-field} continues to apply under the condition
\[
\mathfrak B(f)> 2^d(d-1)\binom{de+r-1}{r-1}.
\]
We restrict the statements of the main theorems to smooth hypersurfaces in order to keep their geometric formulation transparent.
\end{remark}

\bibliographystyle{alphaurl}
\bibliography{references}

\end{document}